\documentclass[10pt]{amsart}
\usepackage[mathscr]{euscript}

\usepackage{fullpage,graphicx,amsmath,amsthm,caption,epstopdf,tikz,enumitem,color,subcaption, float,lineno,comment}
\usepackage{hyperref}
\usepackage[capitalise, noabbrev,nameinlink]{cleveref}
\usetikzlibrary{snakes}

\parskip 9pt

\newtheorem{theorem}{Theorem}[section]
\newtheorem{lemma}[theorem]{Lemma}
\newtheorem{corollary}[theorem]{Corollary}
\newtheorem{remark}[theorem]{Remark}
\numberwithin{figure}{section}

\newcommand{\thistheoremname}{}
\newtheorem{genericthm}[theorem]{\thistheoremname}

\newtheorem*{genericthm*}{\thistheoremname}
\newenvironment{namedthm*}[1]
{\renewcommand{\thistheoremname}{#1}%
	\begin{genericthm*}}
	{\end{genericthm*}}
\newtheorem*{genericdef*}{\thistheoremname}
\definecolor{LSUG}{RGB}{253,208,35}
\definecolor{LSUP}{RGB}{70,29,124}
\definecolor{AUBN}{RGB}{3,36,77}
\definecolor{AUBN2}{RGB}{73,110,156}
\definecolor{AUBO2}{RGB}{246, 128, 38}
\definecolor{Green}{rgb}{0.0, 0.5, 0.0}

\newcommand*{\equal}{\!=\!}
\newcommand{\ff}{\text{fan-free}}
\newcommand{\cf}{\text{cycle-free}}
\newcommand{\cc}{\text{cross-crowded}}

\newcommand{\X}{\mathcal{X}}
\newcommand{\indfull}{\text{pairwise independent full~}}

\usetikzlibrary{arrows,scopes}

\title{Unavoidable Induced Subgraphs of Large 2-Connected Graphs}
\date{\today}

\author[S. Allred]{Sarah Allred}
\address{Department of Mathematics\\Louisiana State University\\Baton Rouge, LA 70803, USA}
\email{sallre4@lsu.edu}

\author[G. Ding]{Guoli Ding}
\address{Department of Mathematics\\Louisiana State University\\Baton Rouge, LA 70803, USA}
\email{ding@math.lsu.edu}

\author[B. Oporowski]{Bogdan Oporowski}
\address{Department of Mathematics\\Louisiana State University\\Baton Rouge, LA 70803, USA}
\email{bogdan@math.lsu.edu}

\subjclass[2010]{Primary 05C75; Secondary 05C55}
\keywords{Unavoidable induced subgraphs, Ramsey theory, 2-connected graphs}

\begin{document}
	
	\begin{abstract}
		Ramsey proved that for every positive integer $n$, every sufficiently large graph contains an induced $K_n$ or $\overline{K_n}$.
		Among the many extensions of Ramsey's Theorem there is an analogue for connected graphs: for every positive integer $n$, every sufficiently large connected graph contains an induced $K_n$, $K_{1,n}$, or $P_n$. 
		In this paper, we establish an analogue for 2-connected graphs.
		In particular, we prove that for every integer exceeding two, every sufficiently large 2-connected graph contains one of the following as an induced subgraph: $K_n$, a subdivision of $K_{2,n}$, a subdivision of $K_{2,n}$ with an edge between the two vertices of degree $n$, and a well-defined structure similar to a ladder.
	\end{abstract}
	
	\maketitle
	
	\section{Introduction}
	
	The terms and symbols that are not defined explicitly in this paper will be understood as defined in Diestel~\cite{diestel}.
	This paper focuses on the induced subgraph relation,
	and so we will often wish to state that a graph $G$ contains an induced subgraph
	isomorphic to a graph~$H$;
	in such a case we will abbreviate this by saying that $G$ \emph{conduces}~$H$.
	All graphs we consider are finite, simple, and undirected.
	
	The classical result of Ramsey~\cite{ramsey}, which served as a motivation for this paper
	and many others, is the following:
	
	\begin{theorem}[Ramsey's Theorem]\label{thm:ramsey}
		For every positive integer $r$, there is an integer $f_{\ref{thm:ramsey}}(r)$
		such that every graph on at least $f_{\ref{thm:ramsey}}(r)$ vertices conduces~$K_r$
		(a complete graph on $r$ vertices) or~$\overline K_r$ (an edgeless graph on $r$ vertices).
	\end{theorem}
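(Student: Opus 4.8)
The plan is to prove \cref{thm:ramsey} by the classical greedy vertex-selection argument together with the pigeonhole principle, which yields the explicit bound $f_{\ref{thm:ramsey}}(r) = 2^{2r-1}$. The idea is to repeatedly pick a vertex and pass to whichever of its neighbourhood and its non-neighbourhood inside the current vertex set is larger; this produces a long sequence of vertices, each of which is related in a uniform way (always adjacent, or always non-adjacent) to every vertex chosen after it, and a pigeonhole step on these ``uniform types'' then extracts a large clique or independent set.

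In detail, let $G$ be a graph with $|V(G)| \geq 2^{2r-1}$. I would construct recursively vertices $v_1, \dots, v_{2r-1}$, a decreasing chain $V(G) = U_0 \supseteq U_1 \supseteq \dots \supseteq U_{2r-1}$, and labels $\ell_1, \dots, \ell_{2r-1} \in \{+,-\}$ as follows. Given $U_{i-1} \neq \emptyset$, choose any $v_i \in U_{i-1}$ and let $A$ and $B$ be, respectively, the set of neighbours and the set of non-neighbours of $v_i$ in $U_{i-1} \setminus \{v_i\}$. One of $A, B$ has size at least $\lceil (|U_{i-1}| - 1)/2 \rceil$; let $U_i$ be that set, and put $\ell_i = +$ if $U_i = A$ and $\ell_i = -$ if $U_i = B$. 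An easy induction gives $|U_i| \geq 2^{2r-1-i}$ for $0 \le i \le 2r-1$, so in particular $U_{i-1} \neq \emptyset$ whenever $i \le 2r-1$ and the construction goes through. By construction, for $j < k$ we have $v_k \in U_{k-1} \subseteq U_j$, so $v_j$ and $v_k$ are adjacent if $\ell_j = +$ and non-adjacent if $\ell_j = -$.

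To finish, apply pigeonhole to $\ell_1, \dots, \ell_{2r-1}$: some symbol, say $+$, occurs at indices $i_1 < i_2 < \dots < i_r$. For all $j < k$ the previous paragraph shows $v_{i_j} v_{i_k} \in E(G)$, so $\{v_{i_1}, \dots, v_{i_r}\}$ induces $K_r$; symmetrically, if $-$ occurs at least $r$ times we obtain an induced $\overline{K_r}$.

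There is no real obstacle here; the only place needing a moment's attention is the bookkeeping of the size bound $|U_i| \geq 2^{2r-1-i}$, since this is exactly what forces the doubling in $f_{\ref{thm:ramsey}}$ and guarantees the chain does not terminate before $2r-1$ vertices have been chosen. If one wanted the sharper estimate $f_{\ref{thm:ramsey}}(r) \le \binom{2r-2}{r-1} + 1$, I would instead introduce the off-diagonal quantity $R(s,t)$ (the least $N$ forcing an induced $K_s$ or $\overline{K_t}$) and prove $R(s,t) \le R(s-1,t) + R(s,t-1)$ by splitting on the neighbourhood of a single vertex and inducting; but the crude exponential bound is entirely sufficient for the applications in this paper.
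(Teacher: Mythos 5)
Your argument is correct: the greedy selection with the invariant $|U_i|\ge 2^{2r-1-i}$ and the final pigeonhole step on the labels is the standard textbook proof of Ramsey's Theorem, and the bookkeeping (including the ceiling computation $\lceil(2^{2r-i}-1)/2\rceil=2^{2r-i-1}$) checks out. Note, however, that the paper does not prove this statement at all; it is quoted as a classical result with a citation to Ramsey's original paper, so there is no internal proof to compare against --- your self-contained derivation with the explicit bound $2^{2r-1}$ is a perfectly adequate substitute, and, as you observe, the quality of the bound is immaterial here since the paper explicitly disclaims any attempt at tight constants.
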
 
	There are numerous extensions of Ramsey's Theorem for graphs of various levels of connectivity and different relations on graphs.    
	For connected graphs, we have the following: 
	\begin{theorem}[(5.3) of \cite{Unavoidableminors3connbinmatroids}]\label{thm:diestel}
		For every positive integer $r$, there is an integer $f_{\ref{thm:diestel}}(r)$
		such that every connected graph on at least $f_{\ref{thm:diestel}}(r)$ vertices
		conduces one of the following graphs: $K_r$, $K_{1,r}$, and $P_r$.
	\end{theorem}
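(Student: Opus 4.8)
The plan is to prove \Cref{thm:diestel} by a clean dichotomy on the diameter of the connected graph $G$, using \Cref{thm:ramsey} as a black box. Fix $r$; for $r\le 2$ the statement is immediate, so assume $r\ge 3$, write $m=f_{\ref{thm:ramsey}}(r)$, and set $f_{\ref{thm:diestel}}(r):=(r-1)\,m^{\,r-2}$. Let $G$ be connected with $|V(G)|\ge f_{\ref{thm:diestel}}(r)$. \emph{Case 1: $\operatorname{diam}(G)\ge r-1$.} Pick $u,w\in V(G)$ with $\dist_G(u,w)=\operatorname{diam}(G)$ and let $Q$ be a shortest $u$--$w$ path. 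Then $Q$ has at least $r$ vertices, and since a geodesic has no chord, $G[V(Q)]$ is a path; its first $r$ vertices then induce $P_r$, so $G$ conduces $P_r$ and we are done.

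\emph{Case 2: $\operatorname{diam}(G)\le r-2$.} Fix $v_0\in V(G)$ and let $L_0,L_1,\dots,L_d$ be its breadth-first layers, so $d\le\operatorname{diam}(G)\le r-2$ (and $d\ge 1$ since $G$ has at least two vertices, hence $\Delta(G)\ge 1$). Every vertex of $L_{i+1}$ has a neighbour in $L_i$, so $|L_{i+1}|\le\Delta(G)\,|L_i|$, and by induction $|L_i|\le\Delta(G)^i$; therefore
\[
|V(G)|=\sum_{i=0}^{d}|L_i|\le (d+1)\,\Delta(G)^{d}\le (r-1)\,\Delta(G)^{\,r-2}.
\]
Comparing with $|V(G)|\ge (r-1)\,m^{\,r-2}$ yields $\Delta(G)\ge m$. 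Choose $v$ with $\deg_G(v)\ge m$ and $S\subseteq N_G(v)$ with $|S|=m$. By \Cref{thm:ramsey}, $G[S]$ conduces $K_r$ or $\overline{K_r}$. In the former case $G$ conduces $K_r$. In the latter, pick an independent set $I\subseteq S$ with $|I|=r$; since $v$ is adjacent to every vertex of $I$ and $I$ is edgeless, $G[I\cup\{v\}]\cong K_{1,r}$, so $G$ conduces $K_{1,r}$. In all cases the theorem follows.

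Conceptually there is no single hard step — the argument is just this two-case split — and the only points requiring care are the quantitative bookkeeping relating the Moore-type inequality $|V(G)|\le(r-1)\Delta(G)^{r-2}$ to the Ramsey threshold, together with the trivial small cases $r\le 2$. I remark that an alternative route starts from a spanning tree $T$ of $G$: a large tree contains a vertex of large degree (which can be handled exactly as in Case 2) or a long path; but a long path in $T$ need not be induced in $G$, so this approach would need an extra lemma extracting an induced path or a large clique from an arbitrary path of $G$. The diameter formulation above sidesteps that difficulty precisely because shortest paths are automatically induced, which is why I would prefer it.
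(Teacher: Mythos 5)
The paper does not actually prove \cref{thm:diestel}; it imports the statement from (5.3) of the cited reference on unavoidable minors of $3$-connected binary matroids, so there is no in-paper argument to compare yours against. That said, your proof is correct and self-contained. The diameter dichotomy works: in Case~1 a geodesic is chordless, so its first $r$ vertices induce $P_r$; in Case~2 the Moore-type bound $|V(G)|\le(d+1)\Delta(G)^d\le(r-1)\Delta(G)^{r-2}$ is valid (each $\Delta^i\le\Delta^d$ since $\Delta\ge 1$), and comparing it with your threshold $(r-1)\,f_{\ref{thm:ramsey}}(r)^{r-2}$ forces a vertex of degree at least $f_{\ref{thm:ramsey}}(r)$, after which \cref{thm:ramsey} applied to the neighbourhood yields an induced $K_r$ or $K_{1,r}$. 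The quantitative bookkeeping checks out, including the edge cases $r\le 2$ and $d\ge 1$. Your closing remark is also apt: the spanning-tree route (long path or high-degree vertex) is the one this paper's own \cref{lem:shortp} follows in the $2$-connected setting, where the normality of the tree is exactly what substitutes for the ``shortest paths are induced'' fact that your diameter formulation exploits for free. The only cosmetic point is that your bound is far from optimal, but the paper explicitly disclaims any interest in tight constants, so that is consistent with its conventions.
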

	
	For 2-connected graphs, we have the following for the relation of topological minors:
	
	\begin{theorem}[(1.2) of \cite{Unavoidabletopminor3conngraphs}]\label{thm:oot}
		For every integer $r$ exceeding two, there is an integer $f_{\ref{thm:oot}}(r)$ such that every $2$-connected graph on at least $f_{\ref{thm:oot}}(r)$ vertices contains a subgraph isomorphic to a subdivision of $K_{2,r}$ or~$C_r$. 
	\end{theorem}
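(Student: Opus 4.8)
The plan is to prove the contrapositive in a sharpened form: if $G$ is $2$-connected, has no subgraph isomorphic to a subdivision of $C_r$ (equivalently, every cycle of $G$ has fewer than $r$ vertices), and has no subgraph isomorphic to a subdivision of $K_{2,r}$, then $|V(G)|$ is bounded by a function of $r$ alone; taking $f_{\ref{thm:oot}}(r)$ to be one more than that bound then gives the theorem. The first step is to pass to the ``topological core'' of $G$. Assuming $G$ is not a cycle (a cycle on fewer than $r$ vertices is already small, and one on at least $r$ vertices contains a subdivision of $C_r$), suppressing all degree-$2$ vertices turns $G$ into a subdivision of a loopless $2$-connected multigraph $H$ with minimum degree at least $3$, and $G$ is recovered by subdividing the edges of $H$. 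Since every cycle of $G$ has fewer than $r$ vertices, no two vertices of $H$ are joined by $r$ or more parallel edges (these would subdivide to $r$ internally disjoint paths, i.e.\ a subdivision of $K_{2,r}$), and each edge of $H$ becomes a path with fewer than $r$ edges in $G$ (the subdivided edge together with any other $H$-path between its ends would otherwise yield a long cycle). If $|V(H)|\le 2$ then $G$ is small, so we may assume $|V(H)|\ge 3$, and it suffices to bound $|V(H)|$.

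Next I would bound the diameter of $H$. Because $H$ is $2$-connected with at least three vertices, any two of its vertices lie on a common cycle, and every cycle of $H$ has fewer than $r$ vertices (a cycle on $k$ vertices in $H$ subdivides to a cycle on at least $k$ vertices in $G$); hence any two vertices of $H$ are at distance at most $\lfloor (r-1)/2\rfloor$, so $\operatorname{diam}(H)<r$. Applying the same idea to the arc of such a cycle that avoids a prescribed vertex $v$ shows moreover that $\operatorname{diam}(H-v)<r$ for every $v\in V(H)$. If $H$ also had bounded maximum degree, then a breadth-first search from any vertex would bound $|V(H)|$ by an exponential in $r$, and we would be finished; since each edge of $H$ is subdivided fewer than $r$ times and $|E(H)|\le (r-1)\binom{|V(H)|}{2}$, this would bound $|V(G)|$ as well.

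So the heart of the matter, and the step I expect to be the main obstacle, is bounding $\Delta(H)$ — equivalently, extracting a subdivision of $K_{2,r}$ from a vertex of large degree. Suppose $v\in V(H)$ has degree at least $r^{\,r+1}$; as no pair is joined by $r$ or more parallel edges, $v$ has more than $r^{\,r}$ distinct neighbours. Let $T$ be a breadth-first search tree of the connected graph $H-v$, which has depth less than $r$ by the previous paragraph, and let $T''$ be the minimal subtree of $T$ containing $N(v)$. Then $T''$ has more than $r^{\,r}$ vertices but depth less than $r$, so it contains a vertex $b$ of degree at least $r$; by minimality of $T''$, every branch of $T''$ at $b$ contains a leaf of $T''$, and every leaf of $T''$ lies in $N(v)$. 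Choosing one such leaf in each of $r$ distinct branches at $b$ and appending its edge to $v$ produces $r$ internally disjoint $b$–$v$ paths, that is, a subdivision of $K_{2,r}$ — a contradiction. Hence $\Delta(H)<r^{\,r+1}$, a breadth-first search bounds $|V(H)|$, and the estimate of the previous paragraph bounds $|V(G)|$ in terms of $r$, as required. The delicate points are the bookkeeping that keeps the extracted $b$–$v$ paths internally disjoint (so that one genuinely obtains a subdivision of $K_{2,r}$ and not merely a tree) and the verification that deleting a vertex does not destroy the diameter bound; the remaining estimates are routine.
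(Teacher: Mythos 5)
This statement is not proved in the paper at all: it is Theorem~(1.2) of the cited reference \cite{Unavoidabletopminor3conngraphs}, quoted as background, so there is no in-paper proof to compare yours against. Judged on its own, your argument is essentially correct and gives a legitimate self-contained proof. The route --- suppress degree-$2$ vertices to get the topological core $H$, use the absence of long cycles to bound the length of each subdivided edge and the diameter of $H$ and of $H-v$, then bound $\Delta(H)$ by extracting a $K_{2,r}$-subdivision from a high-degree vertex via the Steiner subtree of $N(v)$ in a shallow BFS tree of $H-v$ --- is a standard and sound strategy for unavoidable-topological-minor results, and the step you flag as delicate (internal disjointness of the $b$--$v$ paths) does go through: the tree paths from $b$ into distinct branches meet only at $b$, avoid $v$ entirely, and each acquires the internal vertex $\ell_i$, so their union with the edges $\ell_i v$ is genuinely a subdivision of $K_{2,r}$. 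It is worth noting that this tree-based extraction is close in spirit to the paper's own Lemma~\ref{lem:shortp}, which uses a normal spanning tree to pull an induced member of $\mathscr{K}_{2,r}$ or $\mathscr{K}_{2,r}^+$ out of a graph with no long path; your version is easier because you only need a subgraph, not an induced one.

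One small quantitative slip: $r$ parallel edges of $H$ between $u$ and $v$ do \emph{not} automatically yield a subdivision of $K_{2,r}$ in $G$, because in a subdivision of $K_{2,r}$ each of the $r$ $uv$-paths must have an internal vertex, while exactly one of the parallel edges may survive unsubdivided in the simple graph $G$ (giving only a subdivision of $K_{2,r-1}$, or of $K_{2,r-1}$ plus the edge $uv$). The fix is to forbid $r+1$ parallel edges instead of $r$; this only shifts constants and does not affect the structure of the proof. The same slack absorbs the minor bookkeeping around whether the high-degree vertex of the Steiner tree has $r$ children or merely degree $r$.
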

	
	For topological minors, a theorem of this type was proved in \cite{Unavoidabletopminor3conngraphs} for 3- and internally-4-connected graphs. 
	For parallel minors, a theorem of this type was proved in \cite{unavoidableparminor4conngraphs} for 1-, 2-, 3-, and internally-4-connected graphs.
	Similar results have been proved for 3-connected binary and general matroids in \cite{Unavoidableminors3connbinmatroids} and \cite{Unavoidableminors3connmatroids}.
	The goal of the paper is to present an analogous result to \cref{thm:oot} using the original relation in Ramsey's Theorem of induced subgraphs. 
	
	Before stating precisely the main result of this paper, we need to define two families of graphs.
	Let $r$ be an integer exceeding two.
	Let $\mathscr{K}_{2,r}$ be the family of graphs obtained from $K_{2,r}$ by subdividing each of the edges of $K_{2,r}$ an arbitrary number, possibly zero, of times.  
	Let $\mathscr{K}_{2,r}^+$ be the family of graphs obtained from the family $\mathscr{K}_{2,r}$ by adding an edge between the two vertices of degree $r$ to each member of the family~$\mathscr{K}_{2,r}$.
	
	Trees and paths will play a significant role in this paper, so we need some definitions describing their properties.
	A tree $T$ with a distinguished vertex $\rho$, called the \emph{root}, is a \emph{rooted tree} and is denoted by~$(T,\rho)$.
	Its \emph{height} is the maximum distance from one of its vertices to the root.
	There is a natural partial ordering of the vertices of $T$:
	we write $u\le_T v$ whenever $u$ lies on the $\rho v$-path of~$T$.
	We write $u <_T v$ whenever $u$ lies on the $\rho v$-path of $T$ and $u$ is distinct from~$v$.
	If the identity of the tree is clear from the context, we may use $\le$ or~$<$ instead.
	The vertices $v$ such that $u<_{T}v$ are called the \emph{descendants} of~$u$.
	The descendants of $u$ that are also its neighbors are called its \emph{children}.
	For two vertices $a$ and $b$ of $T$ such that $a\le b$, the subgraph of $T$ induced by the vertices $v$ such that $a\le v\le b$ is denoted by~$T[a,b]$.
	Note that if in particular $a>b$, then $T[a,b]$ is empty.
	Similarly, the subgraph of $T$ induced by the vertices $v$ such that $a<v<b$ is denoted by~$T(a,b)$.
	The subgraphs $T(a,b]$ and $T[a,b)$ are defined analogously.
	
	A \emph{messy ladder} is a triple $(L,X,Y)$ that consists of a graph $L$ whose vertices all lie on two disjoint induced paths $X$ and $Y$, called \emph{rails}.
	Each rail is considered to be a tree rooted at one of its endpoints, which is called the \emph{initial vertex}, and the other endpoint is called the \emph{terminal vertex}.
	The edges of $L$ that belong to neither $X$ nor $Y$ are called \emph{rungs}.
	The graph $L$ has an edge, called $\sigma$, between the initial vertices of the rails, and an edge, called~$\tau$, between the terminal vertices of the rails.
	At most one of the rails may be trivial.
	In some contexts when we say messy ladder, we mean only the graph $L$, of which the existence and properties of $X$ and $Y$ are a part.
	The \emph{order} of a messy ladder is its number of vertices.
	The following are equal: the order of a messy ladder,  the order of the graph $L$, and the number of vertices in $X\cup Y$.
	
	If $e$ is a rung in a ladder with rails $X$ and $Y$, then $e_X$ and $e_Y$ denote
	the end\nobreakdash-vertices of $e$ on $X$ and $Y$, respectively.
	Two rungs in an ordered pair $e$ and $f$ \emph{cross} if $e_X<f_X$ and $f_Y <e_Y$.
	We also say that $(e,f)$ is a \emph{cross} whose \emph{$X$-span} is $X[e_X,f_X]$, 
	and whose \emph{$Y$-span} is~$Y[f_Y,e_Y]$. 
	A cross whose $X$-span and $Y$-span are both single edges is \emph{degenerate}.
	A \emph{clean ladder} is a messy ladder whose crosses are all degenerate. 
	
	In all figures of this paper, thick segments represent induced paths with an arbitrary number of vertices, while thin lines indicate single edges.
\begin{figure}[H]
	\begin{center}
	\begin{tikzpicture}
	[scale=.45,auto=left,every node/.style={circle, fill, inner sep=0 pt, minimum size=1mm, outer sep=0pt},line width=.4mm]
	\node (1) at (1,5) {};
	\node (2) at (1,1){};
	\node (3) at (18,5)  {};
	\node (4) at (18,1)  {};
	\node (5) at (7,5)[label=above: $e_X$] {};\node (6) at (8,5)[label=above: $f_X$]{};\node (7) at (7,1)[label=below: $f_Y$]{}; \node (8) at (8,1)[label={[label distance=2pt] below: $e_Y$}]{};
	\node (9) at (12.75,5){}; \node (10) at (13.5,5){}; \node (11) at (12.75,1){}; \node(12) at (13.5,1){};
	\node (13) at (14.25,1){};\node (14) at (14.25,5){};
	\draw[line width=.8mm] (1)-- (5);\draw (5) to (6);\draw[line width=.8mm] (6)to (10.75,5); \draw[Green, line width=.8mm] (10.75,5) to(11.25,5); \draw[line width=.8mm] (11.25,5)--(9);\draw (9) to (10) to (14); \draw[line width=.8mm] (14) to (3);
	\draw[line width=.8mm] (2)-- (2,1); \draw[blue, line width=.8mm](2,1)--(4,1); \draw[line width=.8mm](4,1)--(7);\draw(7)--(8); \draw[line width=.8mm] (8)--(10.25,1); \draw[Green, line width=.8mm](10.25,1)--(12,1);\draw[line width=.8mm](12,1)--(11);\draw(11) to(12)to (13);\draw[line width=.8mm](13)--(4);
	\draw (1) --(2); \draw (3)--(4);
	\draw[color=blue] (3.5,5) to (2,1);\draw[color=blue] (3.5,5) to (3,1);\draw[color=blue] (3.5,5) to (4,1);
	\draw (4,1) to (4.5,5); \draw (4,1) to (5.25,5); \draw (4,1) to (6,5);	
	\draw[color=red] (5) to (8); \draw[color=red] (6) to (7);
	\draw (8.75,5) to (8.75,1);
	\draw (9.25,1) to (9.75, 5); \draw(9.75,5) to (10.25,1); 
	\draw (12,1) to (11.75,5);\draw (12,1) to (12.25,5);\draw (12,1)to (12.75,5);
	\draw[color=Green] (12,1) to (11.25,5); \draw[Green] (10.25,1) to (10.75,5);
	\draw (9) to (12); \draw (11) to (10);
	\draw (12) to (14.25,5); \draw (10) to (14.25,1);
	
	\draw (17.25,1) to (3);
	\end{tikzpicture}
	\vspace{-1mm}
	\caption{A clean ladder}
	
	\label{fig:ladderd}
	\end{center}
\end{figure}
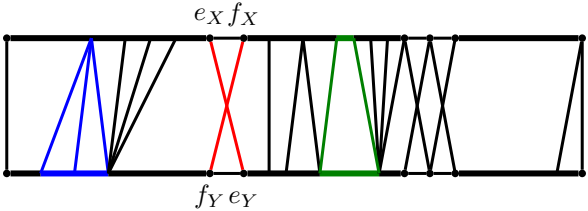
In \cref{fig:ladderd}, there are a few features to notice: the fan indicated by blue line segments is a clean ladder, and so is the cycle indicated by green line segments. 
A degenerate cross is depicted by the red line segments.
These structures are discussed in detail in \cref{Messy Ladder to Clean Ladder}.

The following is the main result of the paper:
\begin{theorem}\label{thm:finite}
	Let $r$ be an integer exceeding two. 
	There is an integer $f_{\ref{thm:finite}}(r)$ such that every $2$-connected graph of order at least $f_{\ref{thm:finite}}(r)$ conduces one of the following: $K_r$, a clean ladder of order at least $r$, a member of $\mathscr{K}_{2,r}$, and a member of $\mathscr{K}_{2,r}^+$.
\end{theorem}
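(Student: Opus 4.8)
The plan is to prove \cref{thm:finite} by chaining two reductions, each powered by Ramsey-type pigeonholing and routed through the auxiliary notion of a messy ladder. First I would establish \emph{reduction I}: there is a function $g_1$ such that, for every $m\ge 3$, every $2$-connected graph of order at least $g_1(m)$ conduces $K_m$, a member of $\mathscr{K}_{2,m}$, a member of $\mathscr{K}_{2,m}^+$, or a messy ladder of order at least $m$. Second I would establish \emph{reduction II}: there is a function $g_2$ such that every messy ladder of order at least $g_2(m)$ conduces $K_m$, a member of $\mathscr{K}_{2,m}$, a member of $\mathscr{K}_{2,m}^+$, or a clean ladder of order at least $m$; this is the content promised for \cref{Messy Ladder to Clean Ladder}. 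Setting $f_{\ref{thm:finite}}(r):=g_1(g_2(r))$ then finishes the proof: reduction~I applied with $m=g_2(r)$ either delivers one of its first three outcomes for $g_2(r)$, each of which in turn conduces the corresponding object for $r$, or it delivers a messy ladder of order at least $g_2(r)$, to which reduction~II applies.

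For reduction~I, I would invoke \cref{thm:oot} with a parameter $r_1=r_1(m)$ chosen huge relative to $m$, so that $G$ contains, as a not-necessarily-induced subgraph, either a subdivision $H$ of $K_{2,r_1}$ with branch vertices $a,b$ and internally disjoint $a$--$b$ paths $P_1,\dots,P_{r_1}$, or a cycle of length $r_1$, which I regard as the degenerate case $H=P_1\cup P_2$. The task is to tame every edge of $G$ that is not an edge of $H$: chords inside a single $P_i$, edges between distinct $P_i$'s, edges at $a$ or $b$, and edges from vertices outside $V(H)$. This I would do by iterated applications of \cref{thm:ramsey} and pigeonholing, using repeatedly the observation that a vertex $v$ — in $H$ or not — adjacent to many vertices that together induce a subpath of some $P_i$ forms an induced fan, which is already a clean ladder of large order, so one may assume no such large star exists. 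Once the paths are induced and the edges between distinct paths are regularized, two cases remain: either many $P_i$'s survive nearly intact and pairwise ``parallel'', in which case two of them trimmed near $a$ and $b$ exhibit an induced member of $\mathscr{K}_{2,m}$ (or of $\mathscr{K}_{2,m}^+$, if an edge survives among the trimmed stubs near $a$ or $b$); or the pruning collapses many paths onto a single cycle whose surviving chords all run across it, which is an induced messy ladder of order at least $m$ — the two arcs are the rails, the first and last chords around the cycle serve as $\sigma$ and $\tau$, and the remaining cross-chords are its rungs.

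For reduction~II, let $(L,X,Y)$ be a messy ladder of huge order and list its rungs. Classifying each ordered pair of rungs as a cross or not, \cref{thm:ramsey} produces either $m$ pairwise crossing rungs or a large set of pairwise non-crossing ones. In the first case the $X$-spans and $Y$-spans of $m$ pairwise crossing rungs are internally disjoint subdivided edges between the two extreme rung-feet on $X$ and on $Y$, which together with the stretches of $X$ and $Y$ between those extreme feet — and with $\sigma$ or $\tau$, if either lies between them — display an induced member of $\mathscr{K}_{2,m}$ or $\mathscr{K}_{2,m}^+$. In the second case, keeping only the pairwise non-crossing rungs and contracting to single edges those stretches of the rails between consecutive retained feet that would otherwise carry a long cross leaves a messy ladder all of whose crosses are degenerate, that is, a clean ladder, of order at least $m$.

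I expect the real obstacle to be reduction~I: \cref{thm:oot} supplies only a topological $K_{2,r_1}$ or a cycle \emph{as a subgraph}, so the many stray edges of $G$ must be controlled all at once and without letting the surviving structure fall below order $m$, which forces a layered sequence of Ramsey and pigeonhole steps together with a careful case analysis of how much of the $K_{2,r_1}$-skeleton survives — the $\mathscr{K}_{2,m}$ and $\mathscr{K}_{2,m}^+$ outcomes arising exactly when few rails survive or when the surviving rails are joined too richly. Reduction~II carries its own subtlety: since ``clean'' forbids only the \emph{non}-degenerate crosses, one must check that deleting the discarded rungs and shrinking the rails does not create a new long crossing among the rungs that are kept.
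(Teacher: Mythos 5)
Your top-level skeleton (large $2$-connected graph $\to$ messy ladder $\to$ clean ladder, with the $K_m$, $\mathscr{K}_{2,m}$, and $\mathscr{K}_{2,m}^+$ outcomes allowed to escape at each stage) matches the paper's, but both of your reductions have genuine gaps, and reduction~II fails as stated. The crossing half of your Ramsey dichotomy is wrong: if $m$ rungs pairwise cross, their feet $x_1<\dots<x_m$ on $X$ and $y_m<\dots<y_1$ on $Y$ are all distinct, and the graph formed by these $m$ rungs together with the rail segments they span has maximum degree three; for $m\ge 4$ it therefore contains no subdivision of $K_{2,m}$ at all. In fact, reversing the orientation of one rail turns these $m$ pairwise crossing rungs into $m$ pairwise non-crossing ones, so a fully crossing configuration is just another (possibly still messy) ladder in disguise, not a member of $\mathscr{K}_{2,m}$. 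The non-crossing half fails for a different reason: ``contracting'' the stretches of rail between consecutive retained feet is not a legal move under the induced subgraph relation --- you may only delete vertices, and deleting internal rail vertices destroys the rail. This is precisely why the paper cannot take the short route you propose and instead must (i) prove quantitative upper bounds on the $X$- and $Y$-spans of every cross under the cycle-free and fan-free hypotheses (\cref{lem:dist btw ncm rungs,lem:bound num ncm rungs cross,lem:bounded spans}), and (ii) eliminate non-degenerate crosses not by discarding rungs but by the rail-swapping ``resolving'' operation applied to a maximal sequence of pairwise independent full crosses (\cref{l:reroute-rail-induced,lem:messyfinite}). Nothing in your sketch substitutes for either step.

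Reduction~I is also thinner than it looks. \cref{thm:oot} hands you a subdivision of $K_{2,r_1}$ or a long cycle only as a subgraph, and the two hard cases of the cleanup are exactly the ones your sketch waves at: when many of the $a$--$b$ paths are pairwise joined by edges, two of them already form a long cycle with arbitrary chords, and when \cref{thm:oot} returns only a cycle you face the same configuration --- in both cases you have reproduced the messy-ladder problem rather than solved it, and you must still first make each path induced before the ``many neighbours on a subpath yields a fan'' observation applies. The paper sidesteps \cref{thm:oot} entirely: it splits on whether $G$ has a long path, handling the short-path case with a normal spanning tree (\cref{lem:shortp}) and the long-path case with the Galvin--Rival--Sands theorem plus bridge chains (\cref{lem:bridges,lem:longPmessyL}), which produces a messy ladder whose rails are induced by construction. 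Your reduction~I could plausibly be repaired along those lines, but reduction~II needs the paper's span-bounding and cross-resolving machinery; a Ramsey dichotomy on crossing pairs does not deliver the stated outcomes.
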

Note that a clean ladder in the theorem can be replaced by a long cycle, a long fan (where rim edges could be subdivided) and a restricted version of the clean ladder where all rungs belong to degenerate crosses.

Our proof uses Ramsey numbers, the known bounds on which are believed to be very far from best possible.
So in our proofs, we value clarity of the arguments over the tightness of the bounds.

To prove the main theorem, we consider the cases where the large 2-connected graph $G$ either has a long path as a subgraph or it does not.  
Section \ref{Graphs Without a Long Path} discusses the case where $G$ does not have a long path. 
In that case, we prove that $G$ conduces two of the graphs listed in the conclusion of \cref{thm:finite}. 
The case where $G$ has a long path is broken into two sections.
In \cref{Long Path to Messy Ladder}, we start the with long path and obtain a large messy ladder.
In \cref{Messy Ladder to Clean Ladder}, we show that if a messy ladder is large enough, then it conduces a sufficiently large clean ladder.
\cref{Proving Main Theorem} combines the results of Sections~\cref{Graphs Without a Long Path,Long Path to Messy Ladder,Messy Ladder to Clean Ladder} to prove \cref{thm:finite}. 

\section{Graphs Without a Long path}\label{Graphs Without a Long Path}
In this section, we prove that a large 2-connected graph either has a long path or conduces one of the graphs in the main result.

A rooted tree $(T,\rho)$ that is a spanning subgraph of a graph $G$ is called \emph{normal}
if, for every two adjacent vertices $u$ and $v$ of $G$, either $u\le_{T}v$ or~$v\le_{T}u$.  
It is well known that every connected graph has a normal spanning tree (Proposition~1.5.6 of~\cite{diestel}).	
A \emph{rooted sub-tree} $(T',\rho')$ of $(T,\rho)$ has $T'$ as sub-tree of $T$ and $(T',\rho')$ preserves the ordering of~$(T,\rho)$.  

\begin{lemma}\label{lem:shortp}
	Let $q$ and $r$ be integers exceeding one.
	There is an integer $f_{\ref{lem:shortp}}(q,r)$ such that if $G$ is a $2$-connected graph on at least
	$f_{\ref{lem:shortp}}(q,r)$ vertices, then $G$ has either a path of order $q+1$ or an induced subgraph that is a member of one
	of the following families: $\mathscr{K}_{2,r}^+$ and $\mathscr{K}_{2,r}$.  
\end{lemma}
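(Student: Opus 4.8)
The plan is to assume that $G$ has no path of order $q+1$ and then to exhibit an induced member of $\mathscr{K}_{2,r}$ or of $\mathscr{K}_{2,r}^+$; we may assume $q\ge 3$, since a $2$-connected graph on at least three vertices already contains a path of order three. First I would fix a normal spanning tree $(T,\rho)$ of $G$, which exists because $G$ is connected. If $T$ had height at least $q$, then the path of $T$ from $\rho$ to a vertex at distance $q$ from $\rho$ would be a path of $G$ of order $q+1$; so the height of $T$ is at most $q-1$. Consequently, if every vertex of $T$ had fewer than $D$ children, then $|V(G)|=|V(T)|\le 1+D+\dots+D^{q-1}<D^{q}$. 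Taking $f_{\ref{lem:shortp}}(q,r)$ to be, say, $(rq)^{q}$, we may therefore assume that some vertex $v$ of $T$ has at least $rq$ children; call $N\ge rq$ of them $c_1,\dots,c_N$ and let $T_i$ be the subtree of $T$ rooted at $c_i$, that is, the subgraph of $T$ induced by $\{x : c_i\le_T x\}$.

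Next I would read off the structure forced by normality. If $xy$ is an edge of $G$ with $x\in V(T_i)$ and $y\notin V(T_i)$, then $x$ and $y$ are comparable in $T$, and since $y$ is not a descendant of $c_i$ it must be a proper ancestor of $c_i$; hence every edge of $G$ leaving $V(T_i)$ ends in $\{v\}\cup A$, where $A$ is the set of proper ancestors of $v$. The same comparability argument shows that $V(T_1),\dots,V(T_N)$ are pairwise nonadjacent in $G$. Since $G$ is $2$-connected, $\rho$ cannot have two children (that would make $\rho$ a cut vertex), so $v\ne\rho$, the set $A$ is nonempty, and $|A|$, being the depth of $v$, is at most $q-2$; also no single vertex separates $V(T_i)$ from the rest of $G$, so $|N_G(V(T_i))\setminus V(T_i)|\ge 2$. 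Because $c_iv\in E(T)$, this last set contains $v$ and therefore at least one vertex of $A$ as well. Assigning to each $T_i$ a vertex of $A$ adjacent to it and pigeonholing the $N\ge rq>r|A|$ subtrees, I obtain a vertex $a\in A$ and, after relabelling, subtrees $T_1,\dots,T_r$ such that each $T_i$ sends an edge to $a$ and (via the tree edge $c_iv$) an edge to $v$; note $a\notin\{v\}\cup\bigcup_i V(T_i)$.

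For each $i\le r$, let $P_i$ be a shortest path from $v$ to $a$ in $G[V(T_i)\cup\{v,a\}]-va$, where the edge $va$ is deleted if it is present. Such a path exists, since $vc_i$ followed by a path of $T_i$ from $c_i$ to a neighbour of $a$ and then by that neighbour's edge to $a$ is a walk from $v$ to $a$ avoiding $va$; so $P_i$ has length at least $2$ and all its interior vertices lie in $V(T_i)$. Minimality makes $P_i$ induced in $G[V(T_i)\cup\{v,a\}]-va$, and it prevents $v$ and $a$ from having any neighbour among the interior vertices of $P_i$ other than, respectively, the first and the last. So $G[V(P_i)]$ is $P_i$ together with the edge $va$ if $va\in E(G)$, and $P_i$ alone otherwise. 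Since the interiors of $P_1,\dots,P_r$ lie in the pairwise nonadjacent sets $V(T_1),\dots,V(T_r)$, the subgraph of $G$ induced by $\{v,a\}$ and all these interiors is precisely a subdivision of $K_{2,r}$ with branch vertices $v,a$ and subdivided edges $P_1,\dots,P_r$, together with the edge $va$ exactly when $va\in E(G)$. Thus it is a member of $\mathscr{K}_{2,r}^+$ when $va\in E(G)$ and of $\mathscr{K}_{2,r}$ otherwise, completing the proof.

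The step I expect to be the main obstacle is the last one: ensuring the subdivided edges are induced and admit no chords to the two hub vertices. Taking each $P_i$ to be a shortest path — and taking it in $G-va$ after deleting $va$, so that the construction is uniform regardless of whether $va$ is an edge — is what makes this automatic. Everything else is routine: the height bound on $T$, the two consequences of normality (boundaries of the $T_i$ lie among common ancestors, and the $T_i$ are pairwise nonadjacent), and a two-layer pigeonhole that first finds a high-degree vertex in the shallow tree $T$ and then finds a common ancestor met by many of its sibling subtrees.
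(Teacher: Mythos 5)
Your proof is correct and follows essentially the same route as the paper's: a normal spanning tree of bounded height, a vertex $v$ with many children, a pigeonhole argument producing a proper ancestor $a$ (the paper's $u$ on the $\rho v$-path) joined to $r$ of the sibling subtrees, and shortest $va$-paths through those subtrees whose union is induced by normality and minimality. The only differences are cosmetic (a looser bound $(rq)^q$ in place of the paper's $2+(d-1)+\dots+(d-1)^{q-1}$, and phrasing the cut-vertex argument via $N_G(V(T_i))$ rather than via $v$ not being a cut vertex).
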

\begin{proof} 	
	We prove that $f_{\ref{lem:shortp}}(q,r)=2+(d-1)+(d-1)^2+\ldots +(d-1)^{q-1}$, where $d=1+(q-2)(r-1)$, satisfies the conclusion.	 
	
	Let $(T,\rho)$ be a normal spanning rooted tree of~$G$.  	
	If $(T,\rho)$ has height at least $q$, then $(T,\rho)$ has a path of order $q+1$, and the conclusion follows.
	
	For the remainder of the proof, we may assume the height of $(T,\rho)$ is less than~$q$.  		 
	Since $G$ has order at least $f_{\ref{lem:shortp}}(q,r)$, the tree $(T,\rho)$ has a vertex $v$ with at least $d$ children.
	
	Let $R$ be the $\rho v$ path in $(T,\rho)$, which has order at most~$q-1$. 
	For each child $v_i$ of $v$, let $(T_i,v_i)$ be the rooted sub-tree of $(T,\rho)$ induced by $v_i$ and all of the descendants of~$v_i$. 
	Since $v$ has at least $d$ children, there are at least $d$ sub-trees of $(T,\rho)$ rooted at children of $v$. 
	We need to consider only $d$ of them: $(T_1,v_1)$, $(T_2,v_2)$,~\dots,~$(T_d,v_d)$.  
	Since $(T,\rho)$ is normal and the rooted sub-trees are distinct, every edge of $G$ with exactly one end in some $T_i$ must have the other end in $R-v$.  
	Since $G$ is 2-connected, it follows that $v$ is not a cut-vertex of~$G$.  
	For each $j\in\{1,2,\dots,d\}$, there is an edge $e_j$ in $G$ incident with both a vertex on $(T_j,v_j)$ and a vertex $u_j$ on~$R-v$.
	By the definition of~$d$, there is a $k \in \{1,2,\dots,d\}$ such that $u_{k}$ is incident to at least $r$ of the edges $e_j$; let $u=u_{k}$.  
	Let~$\mathscr{I}$ be a set of $r$ indices from $\{1,2,\dots,d\}$ of the edges $e_i$ that have $u$ as one endpoint and the other endpoint on $(T_i,v_i)$.
	Each $(T_i,v_i)$ spans a component $G_i$ of $G-V(R)$.
	Both vertices $u$ and $v$ have neighbors in $G_i$.
	Let $G_i'$ be the subgraph of $G$ that consists of $G_i$ and all the edges between $G_i$ and $\{u,v\}$.
	Note that $G_i'$ is connected.
	Let $P_i$ be a shortest $uv$-path in $G_i'$.
	
	Let $H$ be the subgraph of $G$ induced by~$\bigcup_{i\in \mathscr{I}} P_i$.  
	Since $(T,\rho)$ is normal, $G$ has no edges between internal vertices of distinct paths in $\{P_i\}_{i\in \mathscr{I}}$,
	and since each $P_i$ is a shortest $uv$-path in $G_i$, it follows that $H$ is the union of internally-disjoint $uv$-paths.		   
	If $u$ is adjacent to $v$ in $G$, then $H$ is a member of the family~$\mathscr{K}_{2,r}^+$,
	and if $u$ is not adjacent to $v$ in $G$, then $H$ is a member of the family $\mathscr{K}_{2,r}$.
	The conclusion follows.
	 \begin{figure}[H]
	 	\centering
	 	\begin{subfigure}{.45\textwidth}
	 		\begin{center} 
	 		\begin{tikzpicture}
	 		[scale=.7,auto=left,every node/.style={circle, fill, inner sep=0 pt, minimum size=2.25mm, outer sep=0pt},line width=.4mm]
	 		\node (0) at (5.75,6.75)[label= {[label distance=2pt]right:$\rho$}]{};
	 		\node[fill=blue] (1) at (5.75,5.75)[label= {[label distance=2pt]above right:$u$}]{};
	 		\node (2) at (5.75,4.75){};
	 		\node[fill=blue] (3) at (5.75,3.75)[label= {[label distance=2pt]above right:$v$}]{};
	 		\node[fill=red] (4) at (3,2.5)[label= {[label distance=2pt]below left:$v_1$}]{};
	 		\node (5) at (4,2.5)[label= {[label distance=2pt]below left:$v_2$}]{};
	 		\node (6) at (5,2.5)[label= {[label distance=2pt]below:$v_3$}]{};
	 		\node[fill=red] (7) at (6.25,2.5)[label= {[label distance=2pt]below:$v_4$}]{};
	 		\node (8) at (7.25,2.5) [label= {[label distance=2pt]below left:$v_5$}]{};
	 		\node (9) at (8.25,2.5)[label= {[label distance=2pt]below right:$v_6$}]{};
	 		\node (10) at (3.75,1.5){};
	 		\node (11) at (7,1.5){};
	 		\node[fill=red] (13) at (7.5,1.5){};
	 		\node (12) at (4.25,1.5){};
	 		
	 		\draw (0)--(1)--(2)--(3);
	 		\draw (5)--(12); \draw[red] (8)--(13);
	 		\draw[red] (3)--(4); \draw (3)--(5); \draw (3)--(6); \draw[red] (3)--(7); \draw[red] (3)--(8);\draw (3)--(9);
	 		\draw (5)--(10); \draw (8)--(11);
	 		\draw[red] (1) to [bend right] node[midway, left=3pt, fill=white] {$e_1$} (4); \draw[red] (1) to [bend left] node[midway, right=3pt, fill=white] {$e_4$} (7); \draw[red] (1) to [bend left=45]node[midway, right=2pt, fill=white] {$e_5$}(13);
	 		\end{tikzpicture}
	 		\end{center}
	 		\caption{member of the family $\mathscr{K}_{2,r}$ in $G$}	
	 	\end{subfigure}
	 	\begin{subfigure}{.45\textwidth}
	 		\begin{center}
	 			\begin{tikzpicture}
	 		[scale=.7,auto=left,every node/.style={circle, fill, inner sep=0 pt, minimum size=2.25mm, outer sep=0pt},line width=.4mm]
	 		\node (0) at (5.75,6.75)[label= {[label distance=2pt]right:$\rho$}]{};
	 		\node (1) at (5.75,5.75){};
	 		\node[fill=blue] (2) at (5.75,4.75)[label= {[label distance=2pt]above right:$u$}]{};
	 		\node[fill=blue] (3) at (5.75,3.75)[label= {[label distance=2pt]above right:$v$}]{};
	 		\node (4) at (3,2.5)[label= {[label distance=2pt]below left:$v_1$}]{};
	 		\node (5) at (4,2.5)[label= {[label distance=2pt]below right:$v_2$}]{};
	 		\node[fill=red] (6) at (5,2.5)[label= {[label distance=2pt]below:$v_3$}]{};
	 		\node (7) at (6.25,2.5)[label= {[label distance=2pt]below:$v_4$}]{};
	 		\node (8) at (7.25,2.5) [label= {[label distance=2pt]below left:$v_5$}]{};
	 		\node[fill=red] (9) at (8.25,2.5)[label= {[label distance=2pt]below right:$v_6$}]{};
	 		\node[fill=red] (10) at (3.75,1.5){};
	 		\node (12) at (4.25,1.5){};
	 		\node (13) at (7.5,1.5){};
	 		\node (11) at (7,1.5){};
	 		\draw (0)--(1)--(2); \draw[blue](2)--(3);
	 		\draw (5)--(12); \draw(8)--(13);
	 		\draw (3)--(4); \draw[red] (3)--(5); \draw [red](3)--(6); \draw (3)--(7); \draw (3)--(8);\draw[red] (3)--(9);
	 		\draw[red] (5)--(10); \draw (8)--(11);
	 		\draw[red] (2) to [bend right=60] node[midway, left=3pt, fill=white] {$e_2$} (10); \draw[red] (2) to [bend right] node[midway, left=3pt, fill=white] {$e_3$} (6); \draw[red] (2) to [bend left]node[midway, right=3pt, fill=white] {$e_6$}(9);
	 		\end{tikzpicture}
	 		\end{center}
	 		\caption{member of the family $\mathscr{K}_{2,r}^+$ in $G$}	
	 		\label{fig:obtainingk2np}
	 	\end{subfigure}
 	\caption{Process of obtaining a member of the family $\mathscr{K}_{2,r}$ or $\mathscr{K}_{2,r}^+$}
 	\label{fig:obtainingk2n}  
	 \end{figure}
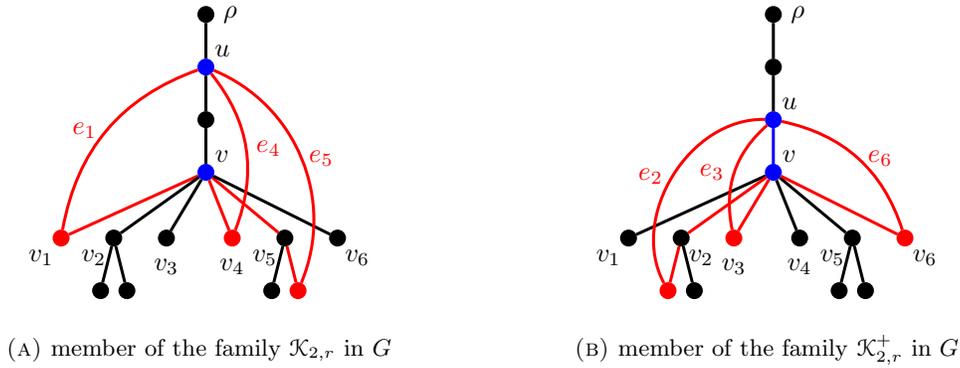 
   
 \cref{fig:obtainingk2n} shows the paths whose union is either a member of the family $\mathscr{K}_{2,r}$ or a member of the family~$\mathscr{K}_{2,r}^+$.  
 The red vertices are the vertices in the bipartition of size $r$ and the blue vertices are members of the bipartition of size two.  
 The red segments show the edges of a graph in $\mathscr{K}_{2,r}$ and the blue edge in \cref{fig:obtainingk2np} illustrates the edge between the two vertices of degree~$r$ in a member of the family $\mathscr{K}_{2,r}^+$.        
\end{proof}

\section{From a Long Path to a Messy Ladder}\label{Long Path to Messy Ladder}
In this section, we prove that if a large 2-connected graph $G$ has a long path as a subgraph, then $G$ conduces one of the following: a large messy ladder, a large complete graph, a large $K_{2,n}$, and a large $K_{2,n}^+$.
The goal of this section is to prove the following lemma.

\begin{lemma}\label{lem:longp}
	Let $p$ and $q$ be integers exceeding two. There is an integer  $f_{\ref{lem:longp}}(p,q)$ such that every $2$-connected graph with a path of order $f_{\ref{lem:longp}}(p,q)$ conduces one of the following: $K_p$, $K_{2,p}$, $K_{2,p}^+$, and a messy ladder of order at least~$q$.
\end{lemma}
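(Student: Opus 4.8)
The starting point is a $2$-connected graph $G$ with a very long path $P = v_0 v_1 \cdots v_N$, where $N = f_{\ref{lem:longp}}(p,q) - 1$. The path $P$ already gives us two disjoint rails if we split it, but a messy ladder needs the two rails to be \emph{induced} paths, it needs the two special edges $\sigma$ and $\tau$ joining initial and terminal vertices, and it needs every vertex of $L$ to lie on $X \cup Y$. So the plan is: first extract from $P$ an induced path that is still long (chords of $P$ can be used to shortcut, and a shortest subpath between two fixed endpoints of a long-enough path is induced and, by a counting/Ramsey argument on the chord structure, still long enough); then use $2$-connectedness to find two vertex-disjoint paths linking the two ends of this induced path back to each other (Menger), which after further shortest-path cleanup become the two rails plus the connecting material; finally, control the adjacencies between the rails so that what we keep is induced, falling into one of the listed outcomes when the control fails.

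**Main line of argument.** I would proceed in stages. \emph{Stage 1: get one long induced path.} Given the long path $P$, look at $P$ together with all chords of $G[V(P)]$. Either $G[V(P)]$ contains a long induced path (take a shortest path in $G[V(P)]$ between $v_0$ and $v_N$; if it is short, then many $v_i$ are ``skipped'' by chords and one argues, via Ramsey on the interval structure of chords, that there is a large clique or a large $K_{2,p}$ / $K_{2,p}^+$ among the $v_i$), or we are already done. \emph{Stage 2: close it into a cycle-like structure.} With a long induced path $Q$ with endpoints $a,b$, apply $2$-connectedness: there are two internally disjoint $ab$-paths in $G$, one of which we can take to be $Q$; the other, call it $Q'$, we replace by a shortest $ab$-path in $G - (V(Q)\setminus\{a,b\})$. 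Now $Q \cup Q'$ is a cycle, but $Q'$ need not be induced and there may be many edges between $Q$ and $Q'$. \emph{Stage 3: clean up into a messy ladder.} Partition the long rail $Q$ into many consecutive intervals; each interval either has no neighbor on $Q'$ (good — it contributes a clean stretch of one rail with no rungs) or has a neighbor on $Q'$. If a single vertex of $Q'$ sends edges to many intervals of $Q$, we get a large $K_{2,p}$ or $K_{2,p}^+$ (with one center on $Q'$, the other being — after a detour — reachable, or more directly we build the subdivided $K_{2,p}$ from the internally disjoint pieces). Otherwise adjacencies are ``spread out,'' and after passing to a large subcollection of intervals and the corresponding pieces of $Q'$ (again via a pigeonhole/Ramsey step to make the local attachment pattern uniform), we obtain two induced rails, bracketed by the required edges $\sigma$ and $\tau$ — i.e. a messy ladder of order at least $q$.

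**The main obstacle.** The delicate part is Stage 3: simultaneously forcing \emph{both} rails to be induced while keeping the structure long. Making $Q$ induced is free (it was chosen induced in Stage 1); making $Q'$ induced costs us a shortest-path re-selection, which is fine; but the edges \emph{between} $Q$ and $Q'$ are the real enemy, since an arbitrary $2$-connected graph can have dense bipartite-type attachment between the two halves of a cycle. The whole point of allowing the outcome to be a \emph{messy} (not clean) ladder is precisely that we tolerate crossing rungs — but we still must prevent a single vertex from having unboundedly many neighbors on the other rail (that is where $K_{2,p}$, $K_{2,p}^+$, and ultimately $K_p$ come in), and we must prevent nested/interleaved attachments from destroying inducedness of a rail. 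So the heart of the proof is a careful Ramsey-flavored extraction: repeatedly apply pigeonhole to the attachment pattern of consecutive blocks of $Q$ onto $Q'$, in each bad case exhibiting one of $K_p$, $K_{2,p}$, or $K_{2,p}^+$ as an induced subgraph (using that the blocks are internally nonadjacent across, by the shortest-path choices), and in the good case retaining $q$ blocks whose union is a messy ladder. I expect the bookkeeping — choosing the block sizes and the iteration depth so the final bound $f_{\ref{lem:longp}}(p,q)$ comes out finite, and verifying inducedness of the two rails in the surviving configuration — to be the most technical step, and the one most deserving of the figures this paper uses so liberally.
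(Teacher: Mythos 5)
Your Stage 1 is essentially the paper's first step: the dichotomy ``long induced path, or else $K_p$ / $K_{2,p}$ / $K_{2,p}^+$'' is exactly Corollary~3.3, which the paper obtains from the Galvin--Rival--Sands theorem (a traceable graph either conduces a long path or contains a large $K_{2,s}$ subgraph) followed by Ramsey on the large side of the $K_{2,s}$. Your sketch of why a short shortest $v_0v_N$-path forces such a configuration is not itself a proof --- that implication is the content of a genuine theorem --- but citing it would be legitimate, so I will not press this point.

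The real problem is Stage~2. You assert that, for the long induced path $Q$ with endpoints $a,b$, two internally disjoint $ab$-paths exist ``one of which we can take to be $Q$.'' That is false: $2$-connectedness gives two internally disjoint $ab$-paths, but neither of them need be $Q$, and there need not exist any $ab$-path avoiding the interior of $Q$. For a concrete obstruction, take a long induced path $v_0v_1\cdots v_N$ and add vertices $w_i$ with $w_i$ adjacent to $v_{2i}$ and $v_{2i+3}$. This graph is $2$-connected and $P$ is induced, yet deleting the interior of $P$ isolates every $w_i$, so no second $ab$-path disjoint from $Q$ exists; your Stages~2 and~3 never get off the ground, and there is no single long ``second rail'' to be found. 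This is precisely the main case the paper has to handle, and it does so with a different decomposition: it considers the Tutte bridges of $Q$ and proves (Lemma~3.4) that either some bridge has a long span --- in which case an induced path through that bridge together with the spanned segment of $Q$ is already a messy ladder --- or there is a long chain of bridges with consecutively overlapping spans. In the chain case the two rails are \emph{not} $Q$ and a disjoint companion path; they are built by alternating, along the chain, between pieces of $Q$ and induced paths threaded through the odd-indexed and even-indexed bridges respectively (Lemma~3.5). Your single-cycle picture cannot be repaired into this, because the second rail must be allowed to weave in and out of $V(Q)$. The Stage~3 cleanup you describe is therefore aimed at the wrong object, and its pigeonhole steps are in any case too vague to check; the inducedness of the rails in the paper comes for free from normality of the bridge decomposition and shortest-path choices inside each bridge, not from a Ramsey extraction on attachment patterns.
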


Before proceeding, we need the following result of Galvin, Rival, and Sands~\cite{grs}.  
\begin{theorem}[Theorem 4 of \cite{grs}]\label{lem:traceable graphs}
	Let $p$, $q$, and $r$ be positive integers.  
	There is an integer $f_{\ref{lem:traceable graphs}}(p,q,r)$ such that every graph with a spanning path of order at least $f_{\ref{lem:traceable graphs}}(p,q,r)$ contains $K_{p,q}$ as a subgraph or conduces a path of order~$r$.
\end{theorem}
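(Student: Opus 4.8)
The plan is to use the spanning path $v_1v_2\cdots v_N$, where $N\ge f_{\ref{lem:traceable graphs}}(p,q,r)$, as a linear order on $V(G)$, thereby converting the problem into one about an ordered graph. Because each $v_iv_{i+1}$ is an edge, consecutive vertices are always adjacent, so the only obstruction to reading a long induced path directly off the order is the presence of \emph{chords}, that is, edges $v_iv_j$ with $j>i+1$. The whole difficulty is to show that chords can either be sidestepped, yielding an induced $P_r$, or are so pervasive that they force a $K_{p,q}$. It is worth noting at the outset that \cref{thm:ramsey} alone cannot be applied to the pair-coloring ``edge versus non-edge'': a monochromatic clique is a complete graph, which does contain $K_{p,q}$, but a monochromatic independent set is edgeless and hence not a path. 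Since an induced path is \emph{not} a homogeneous pattern, the path outcome must be produced constructively rather than by a single Ramsey extraction, which is what dictates the shape of the argument below.

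The constructive engine I would use is a greedy builder of an induced path that maintains the induced property by fiat. Set $u_1=v_1$, and having built an induced path $u_1,\dots,u_t$ with $u_t=v_{i_t}$, let $u_{t+1}$ be the vertex $v_j$ of least index $j>i_t$ such that $v_j$ is adjacent to $u_t$ but non-adjacent to each of $u_1,\dots,u_{t-1}$. Whenever such a vertex exists the enlarged sequence is again an induced path, since adjacency is required only to the current endpoint and non-adjacency to all earlier vertices; thus if the process runs for $r$ steps we have produced an induced $P_r$ and are done. The same mechanism can be phrased as a peeling recursion on $r$: writing $\rho(i)=\max\{j:v_iv_j\in E(G)\}$ for the \emph{reach} of $v_i$, the suffix $G[\{v_{\rho(1)},\dots,v_N\}]$ is again traceable, and if it is still large one recurses there for an induced $P_{r-1}$ whose initial endpoint is $v_{\rho(1)}$, then prepends $v_1$ using that $v_1$ is adjacent to $v_{\rho(1)}$ and, by maximality of $\rho(1)$, to nothing beyond it.

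The complementary case is when the builder \emph{stalls} at some length $t<r$: every vertex $v_j$ with $j>i_t$ adjacent to $u_t$ is also adjacent to one of the boundedly many earlier vertices $u_1,\dots,u_{t-1}$. Writing $W$ for the set of right-neighbours of $u_t$, each member of $W$ is ``blocked'' by some $u_s$ with $s<t$, so a pigeonhole over the at most $t-1<r$ blockers exhibits a single $u_s$ adjacent to a large fraction of $W$, while $u_t$ is adjacent to all of $W$; this is the seed of a complete bipartite pattern between a bounded set of blockers and a large set of common neighbours. The plan here is to restart the builder from many different initial vertices, harvest many such stall configurations, and amalgamate them—via repeated pigeonholing on the bounded blocker side and a Kővári–Sós–Turán-type double count on the large side—into a genuine $K_{p,q}$, boosting thickness by a secondary induction on $p$ and $q$ in which one always restricts to a consecutive, and hence still traceable, block of the order.

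The main obstacle is precisely this last amalgamation, and it is genuinely subtle rather than routine. A single stall, or the weak statement ``every vertex has a neighbour in a bounded terminal block,'' yields only min-degree-one domination into a bounded set, and such a pattern can be realized by a disjoint union of stars, which contains no $K_{2,2}$ at all; the naive double count is therefore off by one, producing $K_{p-1,q}$ but not $K_{p,q}$, and cannot be closed by one application of pigeonhole. Overcoming this requires a nested induction that simultaneously decreases $p$ (and $q$) while preserving traceability of the recursed block, and then reconciles that inner induction with the outer peeling induction on $r$, all the while keeping the ordered structure honest so that each extracted bipartite core is an actual $K_{p,q}$ subgraph and not an artefact of the vertex order. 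Managing the interaction of these two inductions is the delicate heart of the argument and is what drives $f_{\ref{lem:traceable graphs}}(p,q,r)$ to a tower-type bound, consistent with the paper's stated preference for clarity over tightness.
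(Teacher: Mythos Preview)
The paper does not prove this statement at all: it is quoted verbatim as Theorem~4 of Galvin, Rival, and Sands~\cite{grs} and used as a black box in the proof of \cref{lem:longP}. There is therefore no proof in the paper to compare your proposal against.

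As a separate matter, your proposal is a plan rather than a proof. The greedy/peeling mechanism for producing an induced $P_r$ is sound, but the decisive step---turning repeated stalls into an honest $K_{p,q}$---is left at the level of ``harvest many stall configurations and amalgamate them via repeated pigeonholing and a K\H{o}v\'ari--S\'os--Tur\'an-type double count,'' and you explicitly flag that the naive count is off by one and that reconciling the two inductions is ``the delicate heart of the argument.'' That is an accurate diagnosis, but it means the proposal stops exactly where the real work begins; as written it does not establish the theorem. If you want to complete it, the original Galvin--Rival--Sands argument is the place to look, since the present paper contributes nothing toward this particular result.
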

We use this theorem to prove that a large graph conduces either a graph from the list desired in \cref{thm:finite} or a long path.
\begin{corollary}\label{lem:longP}
	Let $q$ and $r$ be integers exceeding two.
	There is an integer $f_{\ref{lem:longP}}(q,r)$ such that every graph with a path of order at least $f_{\ref{lem:longP}}(q,r)$ conduces one of the following: $K_q$, $K_{2,q}$, $K_{2,q}^+$, and a path of order~$r$.
\end{corollary}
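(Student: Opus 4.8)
The plan is to combine \cref{lem:traceable graphs} with \cref{thm:ramsey}. Given a graph $G$ with a long path $P$, I would first pass to $H := G[V(P)]$, the subgraph induced on the vertex set of $P$; this $H$ has $P$ as a \emph{spanning} path, which is exactly what \cref{lem:traceable graphs} requires. Setting $p := f_{\ref{thm:ramsey}}(q)$ and declaring $f_{\ref{lem:longP}}(q,r) := f_{\ref{lem:traceable graphs}}(p,p,r)$, an application of \cref{lem:traceable graphs} to $H$ shows that either $H$ conduces a path of order $r$ --- in which case $G$ does too, since $H$ is an induced subgraph of $G$, and we are done --- or $H$ contains a (not necessarily induced) copy of $K_{p,p}$, with sides $A$ and $B$.

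In the latter case I would clean up this $K_{p,p}$ to an induced structure using Ramsey's theorem inside each side. Apply \cref{thm:ramsey} to $G[A]$: since $|A| = p = f_{\ref{thm:ramsey}}(q)$, either $G[A]$ conduces $K_q$ --- and we are done --- or $A$ contains an independent set $A_0$ with $|A_0| = q$ (here $p \ge q$, so this is meaningful). Do the same with $B$: either we again find an induced $K_q$, or we obtain an independent set $B_0 \subseteq B$ with $|B_0| = q$.

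Finally, observe that every vertex of $A_0$ is adjacent in $G$ to every vertex of $B_0$ (these edges are inherited from the $K_{p,p}$), while $A_0$ and $B_0$ are each independent; hence $G[A_0 \cup B_0]$ is isomorphic to $K_{q,q}$. Choosing any two vertices of $A_0$ together with all $q$ vertices of $B_0$ --- legitimate since $q > 2$ --- yields an induced copy of $K_{2,q}$ in $G$, completing the proof.

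I do not expect a genuine obstacle. The only point requiring care is that \cref{lem:traceable graphs} delivers a $K_{p,p}$ subgraph that need not be induced, so it cannot be quoted directly against the induced-subgraph conclusion; the role of Ramsey's theorem is precisely to extract from each side an independent set, after which the complete bipartite graph between those sets is automatically induced and already contains an induced $K_{2,q}$. (In particular, this argument never needs to produce a member of $K_{2,q}^+$, which is harmless since the conclusion asks only for one of the listed graphs.)
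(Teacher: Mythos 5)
Your proof is correct and follows essentially the same route as the paper: restrict to $H=G[V(P)]$ so that $P$ becomes a spanning path, apply \cref{lem:traceable graphs} to obtain either a long induced path or a complete bipartite subgraph, and then clean that subgraph up with Ramsey's Theorem. The only difference is a parameter choice: the paper requests $K_{2,s}$ and keeps the size-two side intact (whose possible edge is what produces the $K_{2,q}^+$ outcome), whereas you request $K_{p,p}$ and apply Ramsey to both sides, which lets you avoid $K_{2,q}^+$ entirely at the cost of a somewhat larger bound.
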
 
\begin{proof}
	Let $f_{\ref{lem:longP}}(q,r)=f_{\ref{lem:traceable graphs}}(2,s,r)$ where $s=f_{\ref{thm:ramsey}}(q)$, and $f_{\ref{thm:ramsey}}(q)$ and $f_{\ref{lem:traceable graphs}}(2,s,r)$ are the numbers from Ramsey's Theorem (\cref{thm:ramsey}) and \cref{lem:traceable graphs}, respectively.
	We prove that $f_{\ref{lem:longP}}(q,r)$ satisfies the conclusion.
	Suppose $G$ is a graph with a path $P$ of order at least~$f_{\ref{lem:longP}}(q,r)$.

	Let $H$ be the graph obtained from $G$ by deleting all vertices except those on the path~$P$.  
	So $V(H)=V(P)$ and $H$ is an induced subgraph of~$G$.  
	Thus, the path $P$ is a spanning path of $H$ of order at least~$f_{\ref{lem:traceable graphs}}(2,s,r)$.
	By \cref{lem:traceable graphs}, the graph $H$ conduces a path of order $r$ or contains $K_{2,s}$ as a subgraph.  
	If $H$ conduces a path of order $r$, then so does $G$, and the conclusion follows.
	
	Therefore, we may assume $H$ has a subgraph isomorphic to $K_{2,s}$ whose bipartition is $(A,B)$ with $|A|=2$ and $|B|=s$.
	Let $H(B)$ be the subgraph of $H$ induced by $B$, we apply Ramsey's Theorem (\cref{thm:ramsey}) to $H(B)$.
	By \cref{thm:ramsey}, the graph $H(B)$ conduces either $K_q$ or~$\overline{K_q}$.
	If $H(B)$ conduces $K_q$, then so does $H$, and the conclusion follows.
	If $H(B)$ conduces $\overline{K_q}$, then let $I$ be an independent set of order $q$ in~$H(B)$.
	Since $K_{2,s}$ is a subgraph of $H$ which is not necessarily induced, there may be an edge between the two vertices of $A$ in~$H$.
	The subgraph of $H$ induced by the vertex set $A\cup I$ is isomorphic either to $K_{2,q}$ if the vertices of $A$ are non-adjacent, or to $K_{2,q}^+$ otherwise.	
	Since $G$ conduces $H$, we have that $G$ conduces one of the following: $K_q$, $K_{2,q}$, and $K_{2,q}^+$, as desired.	   
\end{proof}

We will use Tutte's notion of a bridge found in \cite{tuttematroids}, see also \cite{handbookcombinatorics}, to build the messy ladder.
Define a \emph{$H$-bridge} (or a \emph{bridge of $H$}) to be a connected subgraph $B$ of $G\setminus E(H)$ that satisfies either one of the following two conditions:
\begin{enumerate}
	\item $B$ is a single edge with both endpoints in~$V(H)$.
	In this case, $B$ is called a \emph{degenerate bridge}.
	\item $B-V(H)$ is a connected component of $G-V(H)$; and $B$ also includes every edge of $G$ with one end point in $V(B)-V(H)$ and the other end point in~$H$.
\end{enumerate}
Note that every edge of $G\setminus E(H)$ belongs to exactly one $H$-bridge. 
Vertices that belong to both $B$ and $H$  are called \emph{vertices of attachment} of~$B$.

Suppose $G$ is a large 2-connected graph that has a long induced $uv$-path $P$.
Our goal is to use $P$  to form a large induced messy ladder.
Since $P$ is an induced path, it has no degenerate bridges.
For each bridge $B_i$ of $P$ in $G$,  let $u_i$ and $v_i$ be the two vertices of attachment of $B_i$ such that $P[u_i,v_i]$ includes all vertices of attachment of~$B_i$.
We call $P[u_i,v_i]$ the \emph{span} of~$B_i$.
A \emph{$P$-bridge chain} $B_1$, $B_2$, \dots, $B_k$ is a sequence of bridges of an induced $uv$-path $P$ satisfying the following:
\[u=u_1< u_2< v_1\le u_3< v_2\le u_4< v_3 \le \dots \le u_{k-1}< v_{k-2}\le u_k< v_{k-1}< v_k\le v\]
The \emph{rank} of a $P$-bridge chain is the number of bridges that form the $P$-bridge chain.
The \emph{span} of a $P$-bridge chain is the union of the spans of its elements.
A $P$-bridge chain of rank 6 is shown in \cref{fig:bridgechain}.
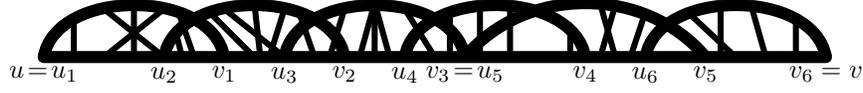
\begin{figure}[H]
	\centering
	\begin{tikzpicture}
		[scale=.8,auto=left,every node/.style={circle, fill, inner sep=0 pt, minimum size=.75mm, outer sep=0pt},line width=1.6mm]
		\node (1) at (1,2)[label= {[label distance=-8pt]below:$u\equal u_1$}] {};
		\node (2) at (3,2)[label=below:$u_2$] {};
		\node (4) at (4,2)[label=below:$v_1$] {};
		\node (5) at (5,2)[label=below:$u_3$] {};
		\node (6) at (6,2)[label=below:$v_2$] {};
		\node (7) at (7,2)[label=below:$u_4$] {};
		\node (8) at (8,2)[label={[label distance=-9.5pt]below:$v_3\equal u_5$}] {};
		\node (10) at (10,2)[label=below:$v_4$] {};
		\node (11) at (11,2)[label=below:$u_6$] {};
		\node (12) at (12,2)[label=below:$v_5$] {};
		\node (14) at (14,2)[label={[label distance =-9pt]below:$v_6=v$}] {};
		\draw[line cap=round] (1,2) to (14,2);			
		\draw[bend left=80](1,2) to (4,2);
		\draw[line width=.8mm] (1.5,2.67) to (1.5,2);\draw[line width=.8mm] (2,2.8)to (3,2);
		\draw[line width=.8mm] (3.25,2.72) to (3.5,2);
		\draw[line width=.8mm] (2.5,2.9) to (2.5,2);
		\draw[line width=.8mm] (3,2.8) to (2,2);
		\draw[bend left=80] (3,2) to(6,2);
		\draw[line width=.8mm] (3.75,2.75) to (4.5,2);
		\draw[line width=.8mm] (4,2.8) to (5,2);
		\draw[line width=.8mm] (5,2.8) to (4.75,2);
		\draw[line width=.8mm] (4.5,2.9) to (4.6,2);
		\draw [line width=.8mm] (5.25,2.72) to (5.5,2);
		\draw[line width=.8mm] (3.2,2.4) to (3.3,2);
		
		\draw[bend left=80] (5,2) to (8,2);
		\draw[line width=.8mm] (6.5,2.9) to (6.5,2);
		\draw[line width=.8mm] (6.5,2.9) to (6.25,2);
		\draw[line width=.8mm] (6.5,2.9) to (6.75,2);
		\draw [line width=.8mm] (6,2.8) to (5.75,2);
		\draw[line width=.8mm] (7,2.8) to (7.5,2);
		
		\draw[bend left=80](7,2) to (10,2);
		\draw[line width=.8mm] (7.75,2.7) to (7.75,2);
		\draw [line width=.8mm] (8.25,2.82) to (8.25,2);
		\draw[line width=.8mm] (8.75,2.82) to (8.75,2);
		\draw[line width=.8mm] (9.75,2.41) to (9.75,2);
		
		\draw[bend left=50] (8,2) to (12,2);
		\draw[line width=.8mm] (10.25,2.88) to (10.5,2);
		\draw [line width=.8mm] (10.5,2.87) to (10.25,2);
		\draw [line width=.8mm] (11,2.75) to (10.75,2);
		
		\draw[bend left=80] (11,2) to (14,2);
		\draw[line width=.8mm] (12.25,2.88) to (12.25,2);
		\draw [line width=.8mm] (11.75,2.7) to (12.5,2);
		\draw[line width=.8mm] (12.75,2.88) to (13,2);
		\draw[line width=.8mm] (13.5,2.6) to (13.5,2);
	\end{tikzpicture}
	
	\caption{$P$-bridge chain of rank 6}
	\label{fig:bridgechain}
\end{figure} 

In the next lemma, we prove that if a large 2-connected graph $G$ has a long induced path, then $G$ has a bridge with a long span, or $G$ has a bridge chain of large rank.
\begin{lemma}\label{lem:bridges}
	Let $r$ be an integer exceeding three. There is an integer $f_{\ref{lem:bridges}}(r)$ such that every 2-connected graph with an induced path $P$ of order at least $f_{\ref{lem:bridges}}(r)$ has a $P$-bridge with span of order at least $r-1$ or a $P$-bridge chain of rank at least~$r-2$.
	
\end{lemma}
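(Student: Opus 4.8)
The plan is to establish the lemma with $f_{\ref{lem:bridges}}(r)=(r-2)^2$. Let $G$ be a $2$-connected graph with an induced $uv$-path $P$, write $P$ as $p_0p_1\cdots p_m$ with $p_0=u$ and $p_m=v$, and suppose $m+1=|V(P)|\ge (r-2)^2$. If some $P$-bridge has span of order at least $r-1$, we are done; so assume from now on that every $P$-bridge has span of order at most $r-2$, and I will produce a $P$-bridge chain of rank at least $r-2$.

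The construction rests on one consequence of $2$-connectivity: for every internal vertex $p_j$ of $P$ there is a $P$-bridge $B$ whose two extreme vertices of attachment lie on opposite sides of $p_j$ on $P$, so that $p_j$ is an interior vertex of the span of $B$. Indeed, since $G$ is $2$-connected, $G-p_j$ is connected and so contains a shortest path $Q$ from one component of $P-p_j$ to the other; minimality forces every internal vertex of $Q$ to lie off $P$, and since $P$ is induced the two endpoints of $Q$, being on opposite sides of $p_j$, are non-adjacent, so $Q$ has at least one internal vertex. Hence the interior of $Q$ lies in a single component of $G-V(P)$, so all edges of $Q$, together with its two endpoints, belong to the corresponding $P$-bridge $B$; the two endpoints of $Q$ are then vertices of attachment of $B$ straddling $p_j$. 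Applying this with $j=1$ yields, in particular, a $P$-bridge with leftmost attachment $u$ whose rightmost attachment is $p_2$ or later. This also disposes of the case $r=4$: then the standing assumption ``every span has order at most $r-2=2$'' already contradicts the existence of a bridge straddling an internal vertex, so some span has order at least $r-1$.

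Assume now $r\ge 5$ and build the chain greedily, always reaching as far along $P$ as possible. Let $B_1$ be a $P$-bridge with leftmost attachment $u=p_0$ whose rightmost attachment $p_{c_1}$ maximizes $c_1$; by the previous paragraph $c_1\ge 2$. Having chosen $B_1,\dots,B_i$ with rightmost attachments $p_{c_1},\dots,p_{c_i}$ where (inductively) $c_1<\dots<c_i<m$, put $c_0:=1$ and let $B_{i+1}$ be a $P$-bridge whose leftmost attachment is $p_a$ with $c_{i-1}\le a<c_i$ and whose rightmost attachment $p_{c_{i+1}}$ maximizes $c_{i+1}$. Such a bridge exists and satisfies $c_{i+1}>c_i$: the structural fact applied to $p_{c_i}$ gives a bridge $B$ straddling $p_{c_i}$, say with leftmost attachment $p_b$, where $b<c_i$ and the rightmost attachment of $B$ has index greater than $c_i$; if $b<c_{i-1}$ then, because the index sets $\{0\}$, $\{c_0,\dots,c_1-1\}$, $\{c_1,\dots,c_2-1\}$, $\dots$, $\{c_{i-2},\dots,c_{i-1}-1\}$ partition $\{0,1,\dots,c_{i-1}-1\}$, the bridge $B$ would have been an eligible choice for one of $B_1,\dots,B_i$ with a strictly farther rightmost attachment, contradicting the maximal choice made there; hence $b\ge c_{i-1}$, so $B$ is eligible for $B_{i+1}$ and forces $c_{i+1}>c_i$. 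Continue until $c_k=m$. Reading off the eligibility ranges, the leftmost attachments $u=u_1,u_2,\dots,u_k$ and the rightmost attachments $v_1,\dots,v_k$ of $B_1,\dots,B_k$ satisfy exactly the string of inequalities defining a $P$-bridge chain, so $B_1,\dots,B_k$ is one.

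It remains to bound $k$. Since the span of $B_1$ has at most $r-2$ vertices, $c_1\le r-3$; since the leftmost attachment of $B_{i+1}$ has index $a<c_i$ and its span has at most $r-2$ vertices, $c_{i+1}\le c_i+(r-4)$. Hence $m=c_k\le (r-3)+(k-1)(r-4)$, and together with $m+1\ge (r-2)^2$ this forces $k\ge r-2$. The step I expect to be the main obstacle is the verification, carried out above, that the greedy extension is always legitimate --- that a bridge straddling the current frontier $p_{c_i}$ cannot reach back before the previous frontier $p_{c_{i-1}}$ --- which is exactly where the ``reach as far as possible'' rule and the fact that the eligibility ranges tile an initial segment of $P$ are used.
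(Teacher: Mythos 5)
Your proof is correct and follows essentially the same approach as the paper: bound every bridge span by $r-2$, use $2$-connectivity of $G$ at the current rightmost attachment to find a bridge straddling it, argue that such a bridge cannot reach back past the previous frontier, and conclude by arithmetic on the chain's span. The only difference is presentational---you build the chain greedily and verify each extension directly, whereas the paper takes a chain of maximum span and derives a contradiction---and your constant $(r-2)^2$ is marginally larger than the paper's $(r-2)+(r-4)^2+1$, which is immaterial.
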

\begin{proof}
	Let~$f_{\ref{lem:bridges}}(r)=(r-2)+(r-4)(r-4)+1$.
	Suppose $G$ is a 2-connected graph that conduces a path $P$ of order at least~$f_{\ref{lem:bridges}}(r)$.
	Let $u$ and $v$ be the endpoints of $P$ such that~$u< v$.
	If the span of a bridge of $P$ has order at least $r-1$, then the conclusion follows. 
	We may therefore assume that each bridge has span of order at most~$r-2$.
	
	There is a $P$-bridge chain $B_1$, $B_2$, \dots,~$B_j$ such that the span of each $P$-bridge $B_i$ is $P[u_i, v_i]$ for $1\le i \le j$ and $u_1=u$.
	If $j\ge r-2$, then the conclusion of the lemma follows, and
	we may thus assume that every $P$-bridge chain with $u_1=u$ has rank at most $r-3$.		
	
	Select a $P$-bridge chain $\mathscr{B}=B_1$, $B_2$, \dots,~$B_k$ with $u_1=u$ and maximum span.
	In order to find an upper bound on the order of the span of $\mathscr{B}$, note that the span of $B_1$ has at most $r-2$ vertices, $k\le r-3$, and the span of each of the bridges $B_2$, $B_3$, \dots,~$B_k$ contributes at most $r-4$ new vertices. 
	Since $P$ has order at least $f_{\ref{lem:bridges}}(r)$, it follows that $v_k\ne v$. 
	Moreover, as $v_k$ is not a cut-vertex of $G$, the path $P$ has a bridge $B$ with a vertex of attachment on $P[u,v_k)$ and another vertex of attachment on $P(v_k,v]$.
	Let $\ell$ be minimal subject to $B$ having a vertex of attachment on $P[u,v_{\ell})$.
	The $P$-bridge chain $B_1$, $B_2$, \dots,~$B_{\ell}$, $B$ has larger span than $\mathscr{B}$; a contradiction.
	
	Thus $G$ has a $P$-bridge chain with rank at least $r-2$, as required.
\end{proof}

The next lemma proves that in either outcome of \cref{lem:bridges}, the graph under consideration conduces a large messy ladder. 

\begin{lemma}\label{lem:longPmessyL}
	Let $r$ be an integer exceeding three. 
	There is an integer $f_{\ref{lem:longPmessyL}}(r)$ such that if a 2-connected graph $G$ has an induced path of order $f_{\ref{lem:longPmessyL}}(r)$, then $G$ conduces a messy ladder of order at least~ $r$.
\end{lemma}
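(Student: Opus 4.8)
The plan is to apply \cref{lem:bridges} and turn each of its two conclusions into a messy ladder; accordingly we set $f_{\ref{lem:longPmessyL}}(r)=f_{\ref{lem:bridges}}(r)$, which is legitimate since $r>3$. Let $P$ be the given induced path of $G$. Since $P$ is induced it has no degenerate bridges, so every $P$-bridge $B$ has a nonempty connected set $B-V(P)$ of internal vertices, and any neighbour on $P$ of an internal vertex of $B$ lies in the span of $B$; this \emph{locality} of bridges is the structural fact the argument relies on. By \cref{lem:bridges}, either some $P$-bridge has a span of order at least $r-1$, or $G$ has a $P$-bridge chain of rank at least $r-2$.

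Suppose first that $B$ is a $P$-bridge with span $P[a,b]$ of order at least $r-1$. As $a$ and $b$ are vertices of attachment of $B$, each has a neighbour in the connected graph $C=B-V(P)$, and $a$ is not adjacent to $b$ because $P$ is induced; hence a shortest $ab$-path $Q$ in $G[\{a,b\}\cup V(C)]$ is an induced path of $G$ whose internal vertices lie in $C$. I would take the rails to be $X=P(a,b)$ and $Y=Q$. These are vertex-disjoint induced paths; the edge of $P$ from $a$ to its successor on $P[a,b]$ joins the initial vertices of $X$ and $Y$ and may be taken as $\sigma$, and symmetrically at $b$ for $\tau$; every remaining edge between the rails runs from $C$ to an attachment of $B$ in the interior of $P[a,b]$, hence is a rung. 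Therefore the subgraph of $G$ induced on $V(X)\cup V(Y)$, with these rails, is a messy ladder, and its order is at least $(r-3)+3=r$.

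Suppose instead that $B_1,\dots,B_k$ is a $P$-bridge chain of rank $k\ge r-2$, so that $u_1<u_2<\dots<u_k$, $v_1<v_2<\dots<v_k$, $u_{i+1}<v_i$, and $v_{i-1}\le u_{i+1}$ along $P$. For each $i$, fix an induced path $Q_i$ of $G$ from $u_i$ to $v_i$ with all internal vertices in $B_i-V(P)$, as in the previous paragraph. I would then \emph{weave} these arcs into two rails: let $X$ run from $u_1$ along $Q_1$ to $v_1$, then along $P$ to $u_3$, then along $Q_3$ to $v_3$, then along $P$ to $u_5$, and so on through all the odd-indexed bridges; let $Y$ run from the $P$-neighbour of $u_1$ along $P$ to $u_2$, then along $Q_2$ to $v_2$, then along $P$ to $u_4$, then along $Q_4$, and so on through all the even-indexed bridges. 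Exactly one of $X,Y$ ends at $v_{k-1}$ and the other at $v_k$; since $v_{k-1}<v_k$, I would prolong the former by one edge of $P$ so that the two terminal vertices become adjacent on $P$ (the chain inequalities make that $P$-vertex available and harmless). The inequalities $u_{i+1}<v_i$ and $v_{i-1}\le u_{i+1}$ force the $P$-portions of $X$ and of $Y$ to occupy pairwise disjoint sub-intervals of $P$ separated by gaps, and each bridge $B_i$ is used by its rail only across $P[u_i,v_i]$; combined with the locality of bridges, this makes $X$ and $Y$ vertex-disjoint and each of them induced (no chord can appear, since $P$ is induced and since $B_i$ has no edge to $P$ outside $P[u_i,v_i]$ while $u_i$ and $v_i$ are consecutive on the rail to the interior of $Q_i$), and it makes the two manufactured $P$-edges admissible choices of $\sigma$ and $\tau$, all other edges between the rails being rungs. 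Since each $Q_i$ contributes the $P$-vertex $u_i$ and at least one internal vertex off $P$, and these $2k$ vertices are distinct, the resulting messy ladder has order at least $2k\ge 2(r-2)\ge r$.

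The step I expect to be the main obstacle is the verification in the second case that the woven rails are vertex-disjoint and induced and that $\sigma$ and $\tau$ are available: this is a careful but elementary bookkeeping argument resting entirely on the chain inequalities and on the locality of bridges. The first case, and the passage from the order estimates to ``at least $r$'', are routine; one minor wrinkle is that closing off the extreme ends of the two rails requires a one-vertex adjustment depending on the parity of $k$, but this does not affect anything.
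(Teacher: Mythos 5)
Your proof follows essentially the same route as the paper's: invoke \cref{lem:bridges}, turn a bridge with long span into a two-rail ladder using an induced path through that bridge, and in the chain case weave the paths $Q_i$ with segments of $P$ into two alternating rails, discarding the overlap segments $P(u_{j+1},v_j)$ — this is exactly the paper's construction of $(G'',X,Y)$, including the parity adjustment at the end. The one slip is in closing off the chain: to make $\tau$ available you must extend the rail ending at $v_{k-1}$ along $P$ all the way to the predecessor of $v_k$ (the paper appends the whole segment $P(v_{k-1},v_k)$), not merely ``by one edge of $P$,'' since $v_{k-1}$ and $v_k$ need not be close on $P$; this is a harmless fix because that segment lies inside the span of $B_k$ only, so the rail stays induced.
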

\begin{proof}
	Let $f_{\ref{lem:longPmessyL}}(r)=(r-2)+(r-4)(r-4)+1$, which is equal to the number $f_{\ref{lem:bridges}}(r)$ from \cref{lem:bridges}.  
	Suppose that $G$ has an induced path $P$ of order~$f_{\ref{lem:longPmessyL}}(r)$. 
	For each bridge $B_i$ of $P$ in $G$,  let $u_i,v_i$ be the two vertices of attachment of $B_i$ such that $P[u_i,v_i]$ is the span of~$B_i$.  	 
	
	If $G$ has a $P$-bridge $B$ with span $P[u',v']$ having order  at least~$r-1$, then let $Q$ be an induced path in $B$ with end\nobreakdash-vertices $u'$ and $v'$.
	Since $P$ is induced, the path $Q$ has at least one vertex distinct from $u'$ and $v'$.
	The subgraph of $G$ induced by $Q\cup P[u',v']$ is a messy ladder of order at least $r$ with rails $P(u',v')$ and $Q$.
	The conclusion follows.	 	
	
	Now, we may assume by \cref{lem:bridges} that $G$ has a $P$-bridge chain ~$B_1$,~$B_2$,~\dots,~$B_{r-2}$.
	For each $1\le i \le r-2$, let $Q_i$ be an induced path in $B_i$ with the endpoints $u_i$ and~$v_i$.  
	Since $P$ is induced, each $Q_i$ contains at least one vertex distinct from $u_i$ and~$v_i$.
	Define $G'$ to be the subgraph of $G$ induced by~$P[u_1,v_{r-2}]~\cup~ \bigcup\limits_{i=1}^{r-2} Q_i$.
	
	In $G'$, we delete vertices on $P(u_{j+1},v_j)$, if they exist, for $j=\{1,2,\dots, r-3\}$ to obtain a graph~$G''$.
	If $r-2$ is odd, then let 
	$X=Q_1\cup P(v_1,u_3)\cup Q_3\cup P(v_3,u_5)\cup Q_5\cup~\cdots~ \cup P(v_{r-4},u_{r-2})\cup Q_{r-2}$ and $Y=P(u_1,u_2)\cup Q_2\cup P(v_2,u_4)\cup Q_4\cup P(v_4,u_6)\cup~\cdots~\cup Q_{r-3}\cup P(v_{r-3},v_{r-2})$.
	If $r-2$ is even, then let $X=Q_1\cup P(v_1,u_3)\cup Q_3\cup P(v_3,u_5)\cup~\cdots~\cup Q_{r-3} \cup P(v_{r-3},v_{r-2})$ and $Y=P(u_1,u_2)\cup Q_2\cup P(v_2,u_4)\cup Q_4\cup~\cdots~\cup P(v_{r-4},u_{r-2})\cup Q_{r-2}$. 
	Let the root of $X$ be $u_1$ and let the root of $Y$ be the neighbor of $u_1$ on~$P$.
	Notice that all vertices of $G''$ lie on $X\cup Y$, the graph $G$ conduces $G''$, and that $(G'',X,Y)$ is a messy ladder.  
 	 	
 	\cref{fig:messyl} illustrates this process of obtaining a messy ladder $(G'',X,Y)$ from~$G'$. 
 	The rails $X$ and $Y$ of $(G'',X,Y)$ are indicated by the green and blue paths.
 	We remind the reader that thin line segments indicate edges of $G'$ and $G''$ and thick curves and line segments indicate induced paths of~$G'$ and $G''$, with the straight line segments possibly being trivial.
 	\medskip
 	\begin{figure}[H]
 		\centering
 		\begin{subfigure}[b]{0.47\textwidth}
 			\begin{tikzpicture}
 			[scale=.5,auto=left,every node/.style={circle, fill, inner sep=0 pt, minimum size=.75mm, outer sep=0pt},line width=.7mm]
 			\node (1) at (1,2) {};
 			\node (14) at (14,2) {};
 			\draw (1,2) to node[midway, below=2pt, fill=white] {$P$} (14,2);			
 			\draw[bend left=80](1,2) to node[midway, above=2pt, fill=white] {$Q_1$} (4,2);
 			\draw[line width=.35mm] (1.5,2.67) to (1.5,2);\draw[line width=.35mm] (2,2.8)to (3,2);
 			\draw[line width=.35mm] (3.25,2.72) to (3.5,2);
 			\draw[line width=.35mm] (2.5,2.9) to (2.5,2);
 			\draw[line width=.35mm] (3,2.8) to (2,2);
 			\draw[bend left=80] (3,2) to node[midway, above=2pt, fill=white] {$Q_2$}(6,2);
 			\draw[line width=.35mm] (3.75,2.75) to (4.5,2);
 			\draw[line width=.35mm] (4,2.8) to (5,2);
 			\draw[line width=.35mm] (5,2.8) to (4.75,2);
 			\draw[line width=.35mm] (4.5,2.9) to (4.6,2);
 			\draw [line width=.35mm] (5.25,2.72) to (5.5,2);
 			\draw[line width=.35mm] (3.2,2.4) to (3.3,2);
 			
 			\draw[bend left=80] (5,2) to node[midway, above=2pt, fill=white] {$Q_3$}(8,2);
 			\draw[line width=.35mm] (6.5,2.9) to (6.5,2);
 			\draw[line width=.35mm] (6.5,2.9) to (6.25,2);
 			\draw[line width=.35mm] (6.5,2.9) to (6.75,2);
 			\draw [line width=.35mm] (6,2.8) to (5.75,2);
 			\draw[line width=.35mm] (7,2.8) to (7.5,2);
 			
 			\draw[bend left=80](7,2) to (10,2);
 			\draw[line width=.35mm] (7.75,2.7) to (7.75,2);
 			\draw [line width=.35mm] (8.25,2.82) to (8.25,2);
 			\draw[line width=.35mm] (8.75,2.82) to (8.75,2);
 			\draw[line width=.35mm] (9.25,2.7) to (9.25,2);
 			\draw[line width=.35mm] (9.75,2.41) to (9.75,2);
 			
 			\draw[bend left=80] (9,2) to (12,2);
 			\draw[line width=.35mm] (10.25,2.83) to (10.5,2);
 			\draw [line width=.35mm] (10.5,2.88) to (10.25,2);
 			\draw [line width=.35mm] (11,2.85) to (10.75,2);

 			\draw[bend left=80] (11,2) to (14,2);
 			\draw[line width=.35mm] (12.25,2.88) to (12.25,2);
 			\draw [line width=.35mm] (11.75,2.7) to (12.5,2);
 			\draw[line width=.35mm] (12.75,2.88) to (13,2);
 			\draw[line width=.35mm] (13.5,2.6) to (13.5,2);
 			\end{tikzpicture}
 			\caption{$G'$ where $r-2$ is even}
 		\end{subfigure}
 		\begin{subfigure}[b]{.47\textwidth}
 			\begin{tikzpicture}
 			[scale=.5,auto=left,every node/.style={circle, fill, inner sep=0 pt, minimum size=.75mm, outer sep=0pt},line width=.7mm]
 			\node (1) at (1,2) {};
 			\node (14) at (14,2) {};
 			\draw (1,2) to (14,2);			
 			\draw[bend left=80](1,2) to (4,2);
 			\draw[line width=.35mm] (1.5,2.67) to (1.5,2);\draw[line width=.35mm] (2,2.8)to (3,2);
 			\draw[line width=.35mm] (3.25,2.72) to (3.5,2);
 			\draw[line width=.35mm] (2.5,2.9) to (2.5,2);
 			\draw[line width=.35mm] (3,2.8) to (2,2);
 			\draw[bend right=80] (3,2) to (6,2);
 			\draw[line width=.35mm] (3.75,1.25) to (4.5,2);
 			\draw[line width=.35mm] (4,1.2) to (5,2);
 			\draw[line width=.35mm] (5,1.2) to (4.75,2);
 			\draw[line width=.35mm] (4.5,1.1) to (4.6,2);
 			\draw [line width=.35mm] (5.25,1.28) to (5.5,2);
 			\draw[line width=.35mm] (3.2,1.6) to (3.3,2);
 			
 			\draw[bend left=80] (5,2) to(8,2);
 			\draw[line width=.35mm] (6.5,2.9) to (6.5,2);
 			\draw[line width=.35mm] (6.5,2.9) to (6.25,2);
 			\draw[line width=.35mm] (6.5,2.9) to (6.75,2);
 			\draw [line width=.35mm] (6,2.8) to (5.75,2);
 			\draw[line width=.35mm] (7,2.8) to (7.5,2);
 			
 			\draw[bend right=80](7,2) to (10,2);
 			\draw[line width=.35mm] (7.75,1.3) to (7.75,2);
 			\draw [line width=.35mm] (8.25,1.18) to (8.25,2);
 			\draw[line width=.35mm] (8.75,1.18) to (8.75,2);
 			\draw[line width=.35mm] (9.25,1.3) to (9.25,2);
 			\draw[line width=.35mm] (9.75,1.59) to (9.75,2);
 			
 			\draw[bend left=80] (9,2) to (12,2);
 			\draw[line width=.35mm] (10.25,2.83) to (10.5,2);
 			\draw [line width=.35mm] (10.5,2.88) to (10.25,2);
 			\draw [line width=.35mm] (11,2.85) to (10.75,2);

 			\draw[bend right=80] (11,2) to (14,2);
 			\draw[line width=.35mm] (12.25,1.12) to (12.25,2);
 			\draw [line width=.35mm] (11.75,1.3) to (12.5,2);
 			\draw[line width=.35mm] (12.75,1.12) to (13,2);
 			\draw[line width=.35mm] (13.5,1.4) to (13.5,2);
 			
 			\end{tikzpicture}
 			\caption{A nicer representation of $G'$}
 		\end{subfigure}\\
 		\begin{subfigure}[b]{.47\textwidth}
 			\begin{tikzpicture}
 			[scale=.5,auto=left,every node/.style={circle, fill, inner sep=0 pt, minimum size=.75mm, outer sep=0pt},line width=.7mm]
 			\node (1) at (1,2) {};
 			\node (14) at (14,2) {};
 			\draw[line width=.35](1,2) to (1.4,2); 
 			\draw[line width=.35] (13.7,2) to (14,2);	
 			\draw[line width=.35mm] (1.5,2.67) to (1.5,2);
 			\draw[line width=.35mm] (2,2.8)to (3,2);
 			\draw[line width=.35mm] (2.5,2.9) to (2.5,2);
 			\draw[line width=.35mm] (3,2.8) to (2,2);
 			\draw[bend left=80,color=Green](1,2) to (4,2);
 			
 			\draw[line width=.35mm] (3.75,1.25) to (4.5,2);
 			\draw[line width=.35mm] (4,1.2) to (5,2);
 			\draw[line width=.35mm] (5,1.2) to (4.75,2);
 			\draw[line width=.35mm] (4.5,1.1) to (4.6,2);
 			\draw[bend right=80,color=blue] (3,2) to (6,2);
 			
 			\draw[line width=.35mm] (6.5,2.9) to (6.5,2);
 			\draw[line width=.35mm] (6.5,2.9) to (6.25,2);
 			\draw[line width=.35mm] (6.5,2.9) to (6.75,2);
 			\draw[bend left=80,color=Green] (5,2) to(8,2);
 			
 			\draw [line width=.35mm] (8.25,1.18) to (8.25,2);
 			\draw[line width=.35mm] (8.75,1.18) to (8.75,2);
 			\draw[bend right=80,color=blue](7,2) to (10,2);
 			
 			\draw[line width=.35mm] (10.25,2.83) to (10.5,2);
 			\draw [line width=.35mm] (10.5,2.88) to (10.25,2);
 			\draw [line width=.35mm] (11,2.85) to (10.75,2);
 			\draw[bend left=80,color=Green] (9,2) to (12,2);
 			
 			\draw[line width=.35mm] (12.25,1.12) to (12.25,2);
 			\draw [line width=.35mm] (11.75,1.3) to (12.5,2);
 			\draw[line width=.35mm] (12.75,1.12) to (13,2);
 			\draw[line width=.35mm] (13.5,1.4) to (13.5,2);
 			\draw[line width=.35mm, ] (11,2) to (12,2);
 			\draw[bend right=80,color=blue] (11,2) to (14,2);
 			\draw[line width=.35mm](1,2) to(1.4,2);
 			\draw[line width=.35mm] (13.7,2) to (14,2);
 			\draw[color=blue] (1.4,2) to (3,2); \node[fill=blue] (2) at (3,2){};
 			\draw[color=Green] (4,2) to (5,2); \node[fill=Green] (3) at (4,2){}; \node[fill=Green] (5) at (5,2){};
 			\draw[color=blue] (6,2) to (7,2); \node [fill=blue](6) at (6,2){}; \node[fill=blue] (5) at (7,2){};
 			\draw[color=Green] (8,2) to (9,2);\node [fill=Green](3) at (8,2){}; \node[fill=Green] (5) at (9,2){};
 			\draw[color=blue] (10,2) to (11,2);\node[fill=blue] (3) at (10,2){}; \node[fill=blue] (5) at (11,2){};
 			\draw[color=Green] (12,2) to (13.7,2);\node[fill=Green] (3) at (12,2){}; 
 			\end{tikzpicture}
 			\caption{$(G'',X,Y)$}
 		\end{subfigure}
 		\caption{Process of obtaining a messy ladder from $G'$ }
 		\label{fig:messyl}
 	\end{figure}
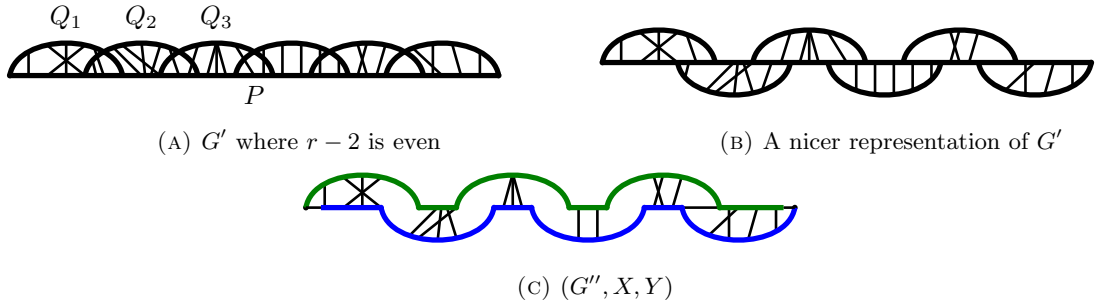
 
 	 	Since each of the $r-2$ bridges contributes to the messy ladder at least one vertex not on $P$, it follows that $(G'',X,Y)$ is a messy ladder of order at least~$r$, as required. 	
 \end{proof}
Note that the numbers in the conclusion of two previous lemmas are the same.  
The process of obtaining a messy ladder from a long induced path has been described in two steps, namely \cref{lem:bridges,lem:longPmessyL}.

We are now ready to prove \cref{lem:longp}, restated below.
\begin{namedthm*}{\cref{lem:longp}}
Let $p$ and $q$ be integers exceeding two. There is an integer  $f_{\ref{lem:longp}}(p,q)$ such that every $2$-connected graph with a path of order $f_{\ref{lem:longp}}(p,q)$ conduces one of the following: $K_p$, $K_{2,p}$, $K_{2,p}^+$, and a messy ladder of order at least~$q$.
\end{namedthm*} 	
\begin{proof}
Let $f_{\ref{lem:longp}}(p,q)= f_{\ref{lem:longP}}(p,r)$ where ~$r=~f_{\ref{lem:longPmessyL}}(q)$.  
Since $G$ has a path of order at least $f_{\ref{lem:longP}}(p,r)$, the graph $G$ conduces one of the following $K_p$, $K_{2,p}$, $K_{2,p}^+$, and~$P_{r}$. 
In the last case, \cref{lem:longPmessyL} implies that $G$ conduces a messy ladder of order at least~$q$, as required.
\end{proof}

\section{From a Messy Ladder to a Clean Ladder}\label{Messy Ladder to Clean Ladder}
We prove that a sufficiently large messy ladder conduces a clean ladder of the desired order.

In order to clean the ladder, we need to define some terms for the crosses.  
The cross $(e,f)$ is \emph{full} if the messy ladder $(L,X,Y)$ has no other cross whose $X$-span contains
the $X$-span of $(e,f)$ and whose $Y$-span contains the $Y$-span of~$(e,f)$.
Two crosses are \emph{independent} if both their $X$-spans and $Y$-spans are edge-disjoint.

In general, crosses may not be ordered in any particular way with respect to the rails $X$ and $Y$, however, \indfull crosses may be ordered by the position in which their vertices appear on the rails, as explained in \cref{l:max-cross-order}.
\begin{lemma}\label{l:max-cross-order}
Let $(e,f)$ and $(g,h)$ be  \indfull crosses of a messy ladder $(L,X,Y)$,
with the $X$- and $Y$-spans being $X[e_X,f_X]$, $Y[f_Y,e_Y]$,
$X[g_X,h_X]$, and $Y[h_Y,g_Y]$, respectively.
Then $f_X \le g_X$ if and only if~$e_Y \le h_Y$.
\end{lemma}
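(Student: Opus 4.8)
The plan is to prove the biconditional by contradiction, reducing it to ruling out two symmetric ``mixed'' configurations. The two ingredients are a separation property of edge-disjoint subpaths of a path (coming from independence) and the maximality encoded by fullness.

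First I would record an elementary fact: two edge-disjoint subpaths of a path meet in at most one vertex, so for the $X$-spans exactly one of $f_X \le g_X$ and $h_X \le e_X$ holds (at least one, since otherwise the subpaths share an edge; not both, since $e_X < f_X$ and $g_X < h_X$). Applying the same reasoning to the $Y$-spans, exactly one of $e_Y \le h_Y$ and $g_Y \le f_Y$ holds. Consequently it suffices to exclude the two mixed possibilities: (i) $f_X \le g_X$ together with $g_Y \le f_Y$; and (ii) $h_X \le e_X$ together with $e_Y \le h_Y$. Indeed, once both are excluded, $f_X \le g_X$ forces $e_Y \le h_Y$ (the alternative $g_Y \le f_Y$ being excluded by (i)), and conversely $e_Y \le h_Y$ forces $f_X \le g_X$ (the alternative $h_X \le e_X$ being excluded by (ii)).

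For (i), assuming $f_X \le g_X$ and $g_Y \le f_Y$ produces the chains $e_X < f_X \le g_X < h_X$ on $X$ and $h_Y < g_Y \le f_Y < e_Y$ on $Y$. In particular $e_X < h_X$ and $h_Y < e_Y$, so $(e,h)$ is a cross, and its $X$-span $X[e_X,h_X]$ and $Y$-span $Y[h_Y,e_Y]$ contain, respectively, the $X$-span $X[e_X,f_X]$ and the $Y$-span $Y[f_Y,e_Y]$ of $(e,f)$. Since $h_Y < f_Y$ forces $h \ne f$, the cross $(e,h)$ is distinct from $(e,f)$, contradicting the fullness of $(e,f)$. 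Case (ii) is handled symmetrically: the hypotheses give $g_X < h_X \le e_X < f_X$ and $f_Y < e_Y \le h_Y < g_Y$, so $(g,f)$ is a cross whose $X$-span $X[g_X,f_X]$ and $Y$-span $Y[f_Y,g_Y]$ contain $X[g_X,h_X]$ and $Y[h_Y,g_Y]$, while $f \ne h$ (as $f_Y < h_Y$), contradicting the fullness of $(g,h)$.

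The only point requiring care is verifying that the auxiliary pairs $(e,h)$ and $(g,f)$ genuinely satisfy the defining strict inequalities of a cross and are genuinely different from the original crosses, so that fullness is actually violated; in both mixed cases this follows at once from the strict inequalities on the relevant $Y$-coordinates. I do not anticipate a real obstacle here: the statement amounts to ``edge-disjoint intervals are linearly separated on a path, and fullness forbids the resulting nesting.''
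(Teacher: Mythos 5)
Your proof is correct and follows essentially the same route as the paper's: both reduce, via edge-disjointness of the spans, to excluding the mixed orderings, and both derive a contradiction with fullness by exhibiting the cross $(e,h)$ (the paper treats the converse direction as ``analogous,'' which is exactly your case with $(g,f)$). Your version is slightly more explicit about the dichotomy coming from independence and about checking that the new cross is genuinely distinct, but the underlying argument is identical.
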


\begin{proof}
Let $(e,f)$ and $(g,h)$ be \indfull crosses.
Suppose for a contradiction that $f_X\le g_X$, however~$e_Y\not\le h_Y$.
Then $h_Y\le f_Y<e_Y$ and ~$e_X<f_X\le g_X< h_X$.
Thus $(e,h)$ is a cross whose $X$-span contains the $X$-spans of $(e,f)$ and of~$(g,h)$.  
This contradicts the fact that the crosses $(e,f)$ and~$(g,h)$ are full.
Hence~$e_Y\le h_Y$.
The other direction of the proof follows an analogous argument.
\end{proof}

For two pairwise independent full crosses $(e,f)$ and $(g,h)$, define the relation $(e,f)<(g,h)$ by $f_X\le g_X$ (or~$e_Y\le~h_Y$). 

We will be interested in maximal sequences of \indfull crosses, that is, those sequences that do not appear as proper subsequences of any other sequence of \indfull crosses.

Let $(L,X,Y)$ be a messy ladder with $\sigma_X$, $\tau_X$, $\sigma_Y$, and $\tau_Y$ as the initial and terminal vertices of $X$ and $Y$, respectively, and let $\mathscr{X}$ be a maximal sequence of \indfull crosses in~$(L,X,Y)$.
Our goal is now to use $\mathscr{X}$ to eliminate all non-degenerate crosses in the messy ladder $(L,X,Y)$ to obtain a clean ladder~$(H,U,W)$.
To do this, we need the following operation on ladders that eliminates non-degenerate \indfull crosses.

Let $\mathscr{X}=(\X_1, \X_2,\dots,\X_z)$ be a maximal sequence of \indfull crosses in $(L,X,Y)$.
The operation of \emph{resolving the cross $\X_i=(e^i,f^i)$} results in a triple $(L',X',Y')$ where~$L'=~L-X(e_X^i,f_X^i)- ~Y(f_Y^i,e_Y^i)$ and $X'=X[\sigma_X,e^i_X]\cup \{e^i\} \cup Y[e^i_Y,\tau_Y]$ and~$Y'=Y[\sigma_Y,f^i_Y]\cup \{f^i\}\cup X[f^i_X,\tau_X]$. 
Since $\X_i$ is a full cross, the graph $L$ has rungs neither from $X[\sigma_X,e^i_X]$ to $Y[e^i_Y,\tau_Y]$ nor from $Y[\sigma_Y,f^i_Y]$ to~$X[f^i_X,\tau_X]$. 
Thus $X'$ and $Y'$ are induced in $L'$, and $(L',X',Y')$ is a messy ladder.
If $\X_i$ is degenerate, then resolving the cross $\X_i$ results in the edges $e_X^if_X^i$ and $f_Y^ie_Y^i$ becoming rungs of $L'$, and the rungs $f^i$ and $e^i$ becoming edges on the rails $X'$ and $Y'$.
Note that we have not deleted any edges or vertices in this case, so the messy ladders $(L',X',Y')$ and $(L,X,Y)$ are isomorphic.

For a maximal sequence of pairwise \indfull crosses $\mathscr{X}=(\X_1,\X_2,\dots, \X_z)$ where $\X_i=(e^i,f^i)$ of a messy ladder $(L,X,Y)$, we inductively define the triples that result from resolving consecutive crosses of~$\mathscr{X}$.
Let $(L^1,X^1,Y^1)$ be the messy ladder obtained by resolving the cross $\X_1$ with
$X^1 =X[\sigma_X,e^1_X]\cup\{e^1\}\cup Y[e^1_Y,\tau_Y]$ and~$Y^1=~Y[\sigma_Y,f^1_Y]\cup~\{f^1\}\cup X[f^1_X,\tau_X]$.
Since the crosses in $\mathscr{X}$ are pairwise independent, the operation of resolving $\X_1$ leaves the other crosses in $\mathscr{X}$ unchanged.
Since the cross $\X_1$ is full, the operation of resolving $\X_1$ does not create a  non-degenerate cross.
If $\X_1$ is degenerate, then $(L^1,X^1,Y^1)$ is isomorphic to~$(L,X,Y)$.
If $\X_1$ is not degenerate, then $(\X_2,\X_3,\dots,\X_z)$ is a maximal sequence of \indfull crosses in $(L^1,X^1,Y^1)$.

For the inductive process, the definition of the rails $X^i$ and $Y^i$ depends on the parity of~$i$.
Suppose we have defined $(L^{i-1},X^{i-1},Y^{i-1})$ for some $2\le i \le z$ where $(L^{i-1},X^{i-1},Y^{i-1})$ is the triple obtained by resolving the crosses~$(\X_1,\X_2,\dots,\X_{i-1})$.
Since each cross of $\mathscr{X}$ is full, the operation of resolving the crosses $(\X_1,\X_2,\dots,\X_{i-1})$ does not create non-degenerate crosses.
Since the crosses of $\mathscr{X}$ are pairwise independent, the crosses $(\X_i,\X_{i+1},\dots,\X_z)$ are unchanged by the operation of resolving the crosses $(\X_1,\X_2,\dots,\X_{i-1})$.
Each cross in $(\X_1,\X_2,\dots,\X_{i-1})$ that was degenerate in $(L,X,Y)$ remains degenerate after resolving the crosses ~$(\X_1,\X_2,\dots,\X_{i-1})$.
So the degenerate crosses from $\X_1$, $\X_2$,~\dots,~$\X_{i-1}$ together with the crosses $\X_i$, $\X_{i+1}$,~\dots,~$\X_z$ form a maximal sequence of \indfull crosses in ~$(L^{i-1},X^{i-1},Y^{i-1})$. 

Let $(L^i,X^i,Y^i)$ be the messy ladder obtained from $(L^{i-1},X^{i-1},Y^{i-1})$ by resolving~$\X_i$.
If $i$ is even, then let ${X^i=X[\sigma_X,e^1_X]\cup\{e^1\}\cup Y[e^1_Y,f^2_Y]\cup \{f^2\}\cup~\cdots~ \cup \{f^i\}\cup X[f^i_X,\tau_X]}$ and $Y^i=Y[\sigma_Y,f^1_Y]\cup \{f^1\}\cup X[f^1_X,e^2_X]\cup\{e^2\}\cup ~\cdots~ \cup \{e^i\}\cup Y[e^i_Y,\tau_Y]$.
If $i$ is odd, then let $X^i=X[\sigma_X,e^1_X]\cup \{e^1\}\cup Y[e^1_Y,f^2_Y]\cup \{f^2\}\cup~\cdots ~\cup\{e^i\}\cup Y[e^i_Y,\tau_Y]$ and $Y^i=Y[\sigma_Y,f^1_Y]\cup \{f^1\}\cup X[f^1_X,e^2_X]\cup\{e^2\}\cup~ \cdots ~\cup \{f^i\}\cup X[f^i_X,\tau_X]$.

Let~$(H,U,W)=(L^z,X^z,Y^z)$.
Since we have resolved the crosses of $\mathscr{X}$, every cross from $\mathscr{X}$ that is in $(H,U,W)$ is degenerate.
Thus $(H,U,W)$ is a clean ladder.
\begin{remark}\label{vertsofcl}
The vertices $e_X^1$, $e_Y^1$, $f_X^1$, $f_Y^1$, \dots ,  $e_X^z$, $e_Y^z$, $f_X^z$, and $f_Y^z$ of the independent full crosses of $\mathscr{X}$ are members of the vertex set of $(H,U,W)$.
\end{remark}
This process of resolving the crosses is depicted in \cref{fig:maxrungseq,fig:graphHUW} below.

	\begin{figure}[H] 
	\centering
	\begin{subfigure}[b]{0.5\textwidth}
		
		\begin{tikzpicture}
		[scale=.45,auto=left,every node/.style={circle, fill, inner sep=0 pt, minimum size=1mm, outer sep=0pt},line width=.4mm]
		\node (1) at (1,5) [label={[label distance=1pt]above: $\sigma_X$}] {};
		\node (2) at (1,1) [label={[label distance=1pt]below: $\sigma_Y$}]{};
		\node (3) at (2.25,5) [label={[label distance=1pt]above: $e_X^1$}]{};
		\node (4) at (2.25,1) [label={[label distance=1pt]below: $f_Y^1$}]{};
		\node (5) at (3.5,5) [label={[label distance=1pt]above: $f_X^1$}]{};
		\node (6) at (3.5,1) [label={[label distance=1pt]below: $e_Y^1$}]{};
		\node (7) at (5,5) [label={[label distance=1pt]above: $e_X^2$}]{};
		\node (8) at (5,1) [label={[label distance=1pt]below: $f_Y^2$}]{};
		\node (9) at (7,5) [label={[label distance=-7pt]above: $f_X^2,e_X^3$}]{};
		\node (10) at (7,1) [label={[label distance=-7pt]below: $e_Y^2,f_Y^3$}]{};
		\node (11) at (9.5,5) [label={[label distance=1pt]above: $f_X^3$}]{};
		\node (12) at (9.5,1) [label={[label distance=1pt]below: $e_Y^3$}]{};
		\node (13) at (11,5) [label={[label distance=1pt]above: $e_X^4$}]{};
		\node (14) at (12,1) [label={[label distance=1pt]below: $e_Y^4$}]{};
		\node (15) at (12,5) [label={[label distance=1pt]above: $f_X^4$}]{};
		\node (16) at (11,1) [label={[label distance=1pt]below: $f_Y^4$}]{};
		\node (17) at (14,5) [label={[label distance=1pt]above: $e_X^5$}]{};
		\node (18) at (14,1) [label={[label distance=1pt]below: $f_Y^5$}]{};
		\node (19) at (15.5,5) [label={[label distance=1pt]above: $f_X^5$}]{};
		\node (20) at (15.5,1) [label={[label distance=1pt]below: $e_Y^5$}]{};
		\node (21) at (17,5) [label={[label distance=1pt]above: $\tau_X$}] {};
		\node (22) at (18,1) [label={[label distance=1 pt]below: $\tau_Y$}] {};
		\draw (1)--(2);\draw (21)--(22);	
		\draw (3) to (6); \draw (5) to (4);
		\draw (7) to (10); \draw (9) to (8);
		\draw (9) to (12); \draw (11) to (10);
		\draw (15) to (16); \draw (13) to (14);
		\draw (18) to (19);\draw (20) to (17);
		\draw (2.75,1) to (2.75,5);
		\draw[red, loosely dashed] (1.5,1) to (4.25,5); \draw[red,dashed] (1.5,5)to (4,1);
		\draw (13,5) to (12.5,1); \draw (13,5) to (13,1); \draw (13,5) to (13.5,1);
		\draw (10.25,5) to (10.25,1);
		\draw[line width=.8mm] (1)--(13); \draw (13)--(15);\draw[line width=.8mm] (15)--(21);
		\draw[line width=.8mm] (2)--(16); \draw (14)--(16);\draw[line width=.8mm](14)--(22);
		\end{tikzpicture}		
		\caption{Sequence of independent full crosses}
	\label{fig:maxrungseq}
	\end{subfigure}	
	\begin{subfigure}[b]{0.49\textwidth}
		
		\begin{tikzpicture}
		[scale=.45,auto=left,every node/.style={circle, fill, inner sep=0 pt, minimum size=1mm, outer sep=0pt},line width=.4mm]
		\node (1) at (1,5) [label={[label distance=1pt]above: $\sigma_X$},fill=blue] {};
		\node (2) at (1,1) [fill=Green, label={[label distance=1pt]below: $\sigma_Y$}]{};
		\node (3) at (2.25,5) [fill=blue, label={[label distance=1pt]above: $e_X^1$}]{};
		\node (4) at (2.25,1) [fill=Green, label={[label distance=1pt]below: $f_Y^1$}]{};
		\node (5) at (3.5,5) [fill=Green, label={[label distance=1pt]above: $f_X^1$}]{};
		\node (6) at (3.5,1) [fill=blue, label={[label distance=1pt]below: $e_Y^1$}]{};
		\node (7) at (5,5) [fill=Green, label={[label distance=1pt]above: $e_X^2$}]{};
		\node (8) at (5,1) [fill=blue, label={[label distance=1pt]below: $f_Y^2$}]{};
		\node (9) at (7,5) [fill=blue, label={[label distance=-7pt]above: $f_X^2,e_X^3$}]{};
		\node (10) at (7,1) [fill=Green, label={[label distance=-7pt]below: $e_Y^2,f_Y^3$}]{};
		\node (11) at (9.5,5) [fill=Green, label={[label distance=1pt]above: $f_X^3$}]{};
		\node (12) at (9.5,1) [fill=blue,label={[label distance=1pt]below: $e_Y^3$}]{};
		\node (13) at (11,5) [fill=Green, label={[label distance=1pt]above: $e_X^4$}]{};
		\node (14) at (12,1) [fill=Green,label={[label distance=1pt]below: $e_Y^4$}]{};
		\node (15) at (12,5) [fill=blue, label={[label distance=1pt]above: $f_X^4$}]{};
		\node (16) at (11,1) [fill=blue, label={[label distance=1pt]below: $f_Y^4$}]{};
		\node (17) at (14,5) [fill=blue, label={[label distance=1pt]above: $e_X^5$}]{};
		\node (18) at (14,1) [fill=Green,label={[label distance=1pt]below: $f_Y^5$}]{};
		\node (19) at (15.5,5) [fill=Green, label={[label distance=1pt]above: $f_X^5$}]{};
		\node (20) at (15.5,1) [fill=blue, label={[label distance=1pt]below: $e_Y^5$}]{};
		\node (21) at (17,5) [fill=Green, label={[label distance=1pt]above: $\tau_X$}] {};
		\node (22) at (18,1) [fill=blue, label={[label distance=1pt]below: $\tau_Y$}] {};
		\draw (1)--(2);\draw (21)--(22);	
		\draw[blue] (3) to (6); \draw[Green] (5) to (4);
		\draw [Green](7) to (10); \draw [blue](9) to (8);
		\draw[blue] (9) to (12); \draw [Green](11) to (10);
		\draw[blue] (15) to (16); \draw[Green] (13) to (14);
		\draw[Green] (18) to (19);\draw[blue] (20) to (17);
		\draw (10.25,5) to (10.25,1);
		\draw (13,5) to (12.5,1); \draw (13,5) to (13,1); \draw (13,5) to (13.5,1);
		\draw(13)--(15); \draw (14)--(16);
		\draw[blue,,line width=.8mm] (1)--(3);\draw[blue,, line width=.8mm](6)--(8);\draw[blue,line width=.8mm] (12)--(16); \draw[blue, line width=.8mm] (15)--(17); \draw[blue, line width=.8mm] (20)--(22);
		\draw[Green,   line width=.8mm] (2)--(4); \draw[Green,  line width=.8mm] (5) to (7); \draw[Green,   line width=.8mm] (11)--(13); \draw[Green,   line width=.8mm] (14)--(18); \draw[Green,   line width=.8mm] (19)--(21); 
		
		\end{tikzpicture}
		
		\caption{The clean ladder $(H,U,W)$}
		\label{fig:graphHUW}
	\end{subfigure}
\caption{ }
\end{figure}
 
The red dashed lines in \cref{fig:maxrungseq} indicate the locations of rungs that cannot exist due to $(e^1,f^1)$ being a full cross.  
In \cref{fig:graphHUW}, the blue path represents the induced path $U$, the green path represents the induced path $W$, and the black lines represent rungs of ~$(H,U,W)$.
Notice that there is a rung in \cref{fig:maxrungseq} that has an endpoint in the $X$-span and an endpoint in the $Y$-span of~$(e^1,f^1)$, and this rung is not in~$(H,U,W)$. 

The next lemma follows from the process described above.    

\begin{lemma}\label{l:reroute-rail-induced}
	Let $\mathscr{X}$ be a maximal sequence of \indfull crosses of a messy ladder.
	Resolving the crosses of $\mathscr{X}$ results in a clean ladder.
\end{lemma}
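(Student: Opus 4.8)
The plan is to organize the resolution procedure that precedes the statement as an induction, the parameter being the number $d$ of non-degenerate crosses occurring in $\mathscr{X}$. Every local fact I need has already been verified there: resolving a full cross yields again a messy ladder, creates no non-degenerate cross, leaves the still-unresolved crosses of $\mathscr{X}$ intact by pairwise independence, keeps degenerate crosses degenerate, and --- as recorded in that discussion --- the degenerate crosses resolved so far together with the still-unresolved crosses of $\mathscr{X}$ always form a maximal sequence of \indfull crosses of the current ladder, so each successive resolution is applied to a maximal sequence. Granting all of this, the only assertion the lemma genuinely adds is that, once every cross of $\mathscr{X}$ has been resolved, the resulting messy ladder has \emph{no} non-degenerate cross whatsoever, which is exactly the definition of a clean ladder.

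For the inductive step, assume $d\ge 1$ and the lemma for sequences with fewer than $d$ non-degenerate crosses. First resolve the initial block of degenerate crosses of $\mathscr{X}$ that precedes its first non-degenerate cross; since each of these resolutions is an isomorphism, we may assume that first non-degenerate cross is $\X_1$. Resolving $\X_1$ deletes the interiors of its two spans and turns its two rungs into rail edges, so $\X_1$ is no longer a cross and nothing takes its place; by the facts above, $(L^1,X^1,Y^1)$ is a messy ladder, no non-degenerate cross is created, and $\X_2,\dots,\X_z$ survive unchanged and form a maximal sequence of \indfull crosses of $(L^1,X^1,Y^1)$ with only $d-1$ non-degenerate members. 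The remaining resolutions carried out on $(L,X,Y)$ are precisely the resolutions of $\X_2,\dots,\X_z$ in $(L^1,X^1,Y^1)$, so the induction hypothesis yields that $(H,U,W)=(L^z,X^z,Y^z)$ is clean.

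The base case $d=0$ carries the real content. Here every cross of $\mathscr{X}$ is degenerate, so resolving all of $\mathscr{X}$ is a composition of isomorphisms, and it suffices to show that the original messy ladder is already clean. Suppose it is not; then it has a non-degenerate cross, and hence a \emph{full} non-degenerate cross $C$, the full cross lying above the given one necessarily being non-degenerate because its spans contain the non-trivial spans below it. By maximality of $\mathscr{X}$ we cannot append $C$, so $C$ shares a span edge with some $\X_j$. But $\X_j$ is a full degenerate cross, and the key claim I would isolate is that \emph{a full degenerate cross is independent from every non-degenerate cross}: if a non-degenerate cross had, say, the single $X$-span edge of $\X_j$ inside its own $X$-span, then --- because degeneracy of $\X_j$ leaves no slack between the ends of its $Y$-span --- one exhibits a cross (that non-degenerate cross itself, or the cross obtained by pairing one of its rungs with one of the rungs of $\X_j$) whose $X$- and $Y$-spans both contain those of $\X_j$, contradicting fullness of $\X_j$. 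Granting this, $C$ is independent from every $\X_j$ and could be appended to $\mathscr{X}$, a contradiction; so the original ladder was clean.

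I expect the key claim of the base case to be the main obstacle, being the one point where the maximality hypothesis is genuinely used and the one ingredient not already discharged by the preceding construction. Proving it cleanly requires a short case analysis on the relative positions on the two rails of the span endpoints of $\X_j$ and of the hypothetical non-degenerate cross --- in the same spirit as \cref{l:max-cross-order}, but exploiting that the two endpoints of a degenerate span are consecutive on their rail; everything else is the bookkeeping already assembled above.
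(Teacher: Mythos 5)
Your proof is correct and, at bottom, takes the same route as the paper: the paper gives no separate argument for this lemma (it states only that the lemma ``follows from the process described above''), and your induction is a careful reorganization of exactly that process, leaning on the same local facts the paper asserts during the construction (each resolution yields a messy ladder, creates no non-degenerate cross, leaves the remaining crosses of $\mathscr{X}$ intact, and preserves maximality of the residual sequence). What you add---and what the paper leaves entirely implicit---is the base case. The paper's closing step jumps from ``every cross from $\mathscr{X}$ that is in $(H,U,W)$ is degenerate'' to ``$(H,U,W)$ is a clean ladder'', which silently requires that a messy ladder whose maximal sequence of pairwise independent full crosses consists only of degenerate crosses has no non-degenerate cross at all. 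Your key claim supplies precisely this: if a full non-degenerate cross $C$ survived, it would have to fail independence with some degenerate $\X_j$, and because the two endpoints of a degenerate span are consecutive on their rail, pairing a rung of $C$ with a rung of $\X_j$ (or taking $C$ itself) produces a cross whose spans contain both spans of $\X_j$, contradicting fullness of $\X_j$; hence $C$ could have been appended to $\mathscr{X}$, contradicting maximality. The case analysis you sketch does close (the three possible positions of $C$'s $Y$-endpoints relative to the single $Y$-span edge of $\X_j$ each yield a witnessing cross distinct from $\X_j$). So the two arguments share the same decomposition, but yours is the more complete write-up, making explicit the one place where maximality of $\mathscr{X}$ is genuinely used.
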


In order to prove that a sufficiently large messy ladder conduces a clean ladder of the desired order, we will need the following lemmas to bound from above the order of the spans of crosses and the distance between consecutive \indfull crosses in a maximal sequence of \indfull crosses. 
We will combine these lemmas with \cref{l:reroute-rail-induced} to obtain a clean ladder of the desired order.  

The following definitions are essential to creating the bounds. 
Define a \emph{non-crossing matching} of a messy ladder to be a set of rungs that are pairwise non-adjacent and non-crossing. 
Let $M$ be a maximal non-crossing matching in $(L,X,Y)$.
The set of edges $M\cup \{\sigma,\tau\}$ is an \emph{augmented matching} of a messy ladder.
Note that the rungs of an augmented matching have the following properties: (1)~they are pairwise non-crossing; (2)~the only vertices that can be endpoints of at most two members of $M\cup\{\sigma,\tau\}$ are $\sigma_X$, $\sigma_Y$, $\tau_X$, and~$\tau_Y$.
A \emph{clean cycle} consists of two distinct rungs $e$ and $f$ that do not cross and the sub-paths of $X$ and $Y$ determined by $e_X$, $e_Y$, $f_X$, and $f_Y$.
A ladder is \emph{$r$-cycle-free} if it conduces no clean cycle of order $r$ or more.
A \emph{fan} of order $s$, denoted by $F_s$, is the graph obtained by taking an isolated vertex called the \emph{apex} and a path of order $s-1$ called the \emph{rim} and adding an edge between the apex and every vertex on the rim.
Let $\mathscr{F}_s$ be the family of graphs obtained from $F_s$ by subdividing each of the rim edges an arbitrary number, possibly zero, of times.
A member of the family $\mathscr{F}_s$ that is an induced subgraph of a ladder and has the apex on one rail and the rim entirely on the other rail is called \emph{clean}.
A ladder is said to be \emph{$s$-\ff} if it conduces no clean member of the family $\mathscr{F}_{s}$.

In the next three lemmas, we will consider messy ladders that are $r$-\cf\ and $s$-\ff\ for some values of $r$ and~$s$.
Our goal in those lemmas is to bound from above the order of the $X$-span and the $Y$-span of every cross by a function of $r$ and~$s$.
First, we bound from above the number of vertices on each of the rails of a messy ladder between two consecutive rungs in an augmented matching.

\begin{lemma}\label{lem:dist btw ncm rungs}
	Let $r$ and $s$ be integers exceeding three.  
	Let  $(L,X,Y)$ be a $r$-\cf\ and $s$-\ff\ messy ladder with an augmented matching~$M$.  
	There is an integer $f_{\ref{lem:dist btw ncm rungs}}(r,s)$ such that the number of vertices on each $X$ and $Y$ between a pair of consecutive rungs in $M$, including the endpoints of the rungs, is at most $f_{\ref{lem:dist btw ncm rungs}}(r,s)$. 
\end{lemma}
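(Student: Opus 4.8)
The plan is to localize: pass to the sub-ladder spanned by the stretch of the two rails lying between the two consecutive rungs, observe that this sub-ladder has only few rungs, and then bound a rail of such a ladder --- with $r$-cycle-freeness doing the real bounding and $s$-fan-freeness used only to guarantee ``few rungs''. Fix consecutive rungs $e<f$ of $M$, so $e_X<_Xf_X$ and, since $e$ and $f$ do not cross, $e_Y<_Yf_Y$; it suffices to bound $|V(X[e_X,f_X])|$, the $Y$-case being symmetric, and we set $C$ to be the cycle $X[e_X,f_X]\cup\{f\}\cup Y[e_Y,f_Y]\cup\{e\}$ of $L$. First, by maximality of $M$ no rung has one end in $X(e_X,f_X)$ and the other in $Y(e_Y,f_Y)$ --- such a rung could be added to $M$ --- so every chord of $C$ meets $\{e_X,e_Y,f_X,f_Y\}$. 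Second, if $e_Y$ were joined by rungs to $s-1$ vertices of $X[e_X,f_X]$, then, taking $p$ and $q$ to be the leftmost and rightmost of these, the graph induced by $\{e_Y\}\cup V(X[p,q])$ would be a clean member of $\mathscr{F}_{s'}$ for some $s'\ge s$, hence would conduce a clean member of $\mathscr{F}_s$, contrary to $s$-\ff; so each of $e_Y,f_Y$ has rungs to at most $s-2$ vertices of $X[e_X,f_X]$, and symmetrically each of $e_X,f_X$ has at most $s-2$ rungs to $Y[e_Y,f_Y]$, and thus $C$ has at most $4(s-2)$ chords.

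Next I would let $L^{\dagger}$ be the subgraph of $L$ induced by $V(X[e_X,f_X])\cup V(Y[e_Y,f_Y])$. This is a messy ladder with rails $X[e_X,f_X]$ and $Y[e_Y,f_Y]$ and with $e$ and $f$ playing the roles of its $\sigma$- and $\tau$-edges. Its rungs are precisely the rungs of $L$ with both ends in $V(L^{\dagger})$, so by the previous paragraph there are at most $4(s-2)$ of them, each incident with one of the four corner vertices; and $L^{\dagger}$ inherits $r$-\cf\ and $s$-\ff\ from $L$, because every clean cycle and every clean member of $\mathscr{F}_s$ inside $L^{\dagger}$ is also one inside $L$. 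It therefore suffices to prove the self-contained statement that for all integers $r,s>3$ and $t\ge 0$ there is an integer $g(r,s,t)$ such that every $r$-\cf, $s$-\ff\ messy ladder with at most $t$ rungs has each rail of order at most $g(r,s,t)$; the lemma then follows with $f_{\ref{lem:dist btw ncm rungs}}(r,s)=g\big(r,s,4(s-2)\big)$.

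To prove that statement, note that the at most $2t+4$ vertices that are ends of rungs or of $\sigma$ or $\tau$ split a given rail into at most $2t+3$ rung-free sub-paths, so it is enough to bound the order of a single rung-free sub-path $Q$. The two ladder edges incident with the vertices immediately before and after $Q$ span a cycle whose rail-span contains $Q$; since neither $\sigma$ nor $\tau$ crosses any rung, one can choose these two boundary edges --- falling back to $\sigma$ or $\tau$ when they would otherwise cross --- so that they do not cross, i.e.\ so that the cycle is a clean cycle, and then delete its chords (at most $4(s-2)$ of them by the fan-free estimate above, each deletion leaving a strictly smaller instance of the same picture) until a chordless clean cycle of order at least $|V(Q)|$ is exposed, forcing $|V(Q)|<r$ by $r$-\cf; summing over the at most $2t+3$ sub-paths bounds the rail. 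The hard part is exactly this last step --- arranging a genuinely \emph{non-crossing} pair of boundary edges for a rung-free sub-path and then driving the chord-deletion recursion through to uncover a clean cycle long enough to contradict $r$-cycle-freeness --- and it is there that both parameters $r$ and $s$ are needed and that essentially all of the (elementary but somewhat delicate) case analysis lives.
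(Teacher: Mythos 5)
Your setup coincides with the paper's: you localize to the window between two consecutive rungs of the augmented matching, use maximality of $M$ to conclude that every rung inside the window is incident with one of the four corner vertices, and use $s$-fan-freeness to bound the number of such rungs by $4(s-2)$; all of that is correct and is exactly the paper's first two observations. But the decisive step --- showing that each rung-free stretch of a rail has fewer than $r$ vertices --- is not actually proved in your write-up. You reduce to it, describe a plan (``choose two non-crossing boundary edges, fall back to $\sigma$ or $\tau$ when they would cross, then delete chords until a chordless clean cycle containing $Q$ is exposed''), and then explicitly defer it, noting that ``essentially all of the case analysis lives'' there. That deferred step is the entire content of the lemma: $r$-cycle-freeness only forbids \emph{induced} clean cycles (the ladder must \emph{conduce} one), so every application of it requires exhibiting a specific chordless cycle, and your recursion must be checked to preserve three things simultaneously at each deletion --- that the two bounding edges are rungs that do not cross, that the shortened cycle still contains $Q$ in its span, and that it terminates at a genuinely chordless cycle. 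None of this is verified, so as written the argument is a proof outline with its core missing.

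For comparison, the paper closes this step without any chord-deletion recursion by anchoring the cycles at a \emph{single} corner: for the corner $p^j_X$, two rungs to \emph{consecutive} neighbours on $Y_j$ automatically bound a chordless clean cycle (there is nothing left to be a chord), so the at most $s-2$ neighbours of $p^j_X$ chop $Y[p^j_Y,v_1]$ into at most $s-3$ gaps of at most $r-4$ internal vertices each, where $v_1$ is the farthest neighbour of $p^j_X$ on $Y_j$; symmetrically for $p^{j+1}_X$ and its farthest neighbour $v_2$; and the single remaining middle piece $Y(v_1,v_2)$ is bounded by the one two-rung clean cycle on $p^j_Xv_1$ and $p^{j+1}_Xv_2$, whose chordlessness follows from the extremal choice of $v_1$ and $v_2$ together with the maximality of $M$. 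If you replace your recursion by this ``farthest neighbour of a fixed corner'' device, your argument goes through and yields a bound of the same order as the paper's $2\bigl((s-3)(r-4)+(s-2)\bigr)+r-5$.
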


\begin{proof}
	Let $f_{\ref{lem:dist btw ncm rungs}}(r,s)=2((s-3)(r-4)+(s-2))+r-5$. 
	Our goal is to show that the number of vertices between a consecutive pair of augmented matching rungs is at most $f_{\ref{lem:dist btw ncm rungs}}(r,s)$.
	
	Let $(p^1,p^2,\dots, p^{\ell})$ be an augmented matching $M$ whose rungs are listed in the order of appearance on the rails.
	Let $X_j=X[p^j_X,p^{j+1}_X]$ and let $Y_j=~Y[p^j_Y,p^{j+1}_Y]$ for some $1\le j \le \ell-1$.
	Since $M$ is an augmented matching, it follows $L$ has no edge $xy$ such that $x\in X(p^j_X,p^{j+1}_X)$ and $y\in Y(p^j_Y,p^{j+1}_Y)$.
	Each of the vertices $p^j_X$, $p^j_Y$, $p^{j+1}_X$, and $p^{j+1}_Y$ has at most $s-2$ incident rungs, including the non-matching rungs, since $(L,X,Y)$ is $s$-\ff.
	
	We will bound from above the order of $Y_j$; the argument for $X_j$ is similar.
	Let $v_1$ be the vertex on $Y_j$ such that $p^j_Xv_1$ is a rung and the number of vertices on $Y[p^j_Y,v_1]$ is the maximum, and let $v_2$ be the vertex on $Y_j$ such that $p^{j+1}_Xv_2$ is a rung and the number of vertices on $Y[v_2,p_Y^{j+1}]$ is the maximum.
	We can express $Y[p^j_Y,p^{j+1}_Y]$ as $Y[p^j_Y, v_1]\cup Y(v_1,v_2)\cup  Y[v_2,p^{j+1}_U]$, where $Y(v_1,v_2)$ may be empty. 
	We first bound the order of $Y[p^j_Y,v_1]$. 
	Consider the vertices on $Y$ adjacent to~$p_X^j$.
	Since $(L,X,Y)$ is $r$-\cf\ and $s$-\ff, there are at most $s-2$ such vertices, and the sub-path of $Y$ between every two consecutive neighbors of $p_X^j$ has at most $r-4$ internal vertices.  
	There are at most $s-3$ of the sub-paths of $Y$ determined by the neighbors of $p_X^j$.
	
	So,  $Y[p^j_Y,v_1]$ has at most $s-2+(r-4)(s-3)$ vertices.
	Similarly, the number of vertices on $Y[v_2,p_Y^{j+1}]$ is at most~$(r-4)(s-3)+s-2$.	
	Since $(L,X,Y)$ is $r$-\cf, the number of vertices on $Y(v_1,v_2)$ is at most $r-5$.  
	So the number of vertices on $Y_j$ is at most $2((s-3)(r-4)+(s-2))+r-5$, as required.
\end{proof}

Next, we bound the number of rungs of an augmented matching that some other rung may cross.  

\begin{lemma}\label{lem:bound num ncm rungs cross}
	Let $r$ and $s$ be integers exceeding three.
	Let  $(L,X,Y)$ be a messy ladder that is $r$-\cf, $s$-\ff, and has an augmented matching~$M$.  
	There is an integer $f_{\ref{lem:bound num ncm rungs cross}}(r,s)$ such that if a rung crosses two rungs in $M$, then the number of vertices on the sub-paths of $X$ and $Y$ determined by endpoints of those two rungs in $M$ is at most~$f_{\ref{lem:bound num ncm rungs cross}}(r,s)$.  
\end{lemma}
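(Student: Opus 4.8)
The plan is to reduce the statement to an upper bound on how many rungs of $M$ a single rung can cross, and then to feed that bound into \cref{lem:dist btw ncm rungs}.

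First I would fix notation and normalize the configuration. Suppose a rung $e$ crosses two rungs $p$ and $q$ of $M$. Since the rungs of $M$ are pairwise non-crossing, listing the endpoints of $p$ and $q$ in the order they appear on the rails gives $p_X<q_X$ exactly when $p_Y<q_Y$; after interchanging $p$ and $q$ if necessary, assume this holds. Checking the two ways in which $e$ can cross a rung, the only possibilities compatible with $e$ crossing both $p$ and $q$ are $e_X<p_X$ together with $q_Y<e_Y$, or $q_X<e_X$ together with $e_Y<p_Y$; the second is carried to the first by the symmetry of the hypothesis that interchanges the rails $X$ and $Y$, so we may assume $e_X<p_X$ and $q_Y<e_Y$. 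Then $e$ crosses every rung $m$ of $M$ with $p_X\le m_X\le q_X$, since for such an $m$ we have $e_X<p_X\le m_X$ and $m_Y\le q_Y<e_Y$. Write $p=m_1,m_2,\dots,m_k=q$ for these rungs in the order of appearance. It suffices to bound $k$: once $k\le B$ for some $B=B(r,s)$, the paths $X[p_X,q_X]$ and $Y[p_Y,q_Y]$ are each the union of the $k-1$ consecutive gaps between $m_i$ and $m_{i+1}$, each of order at most $f_{\ref{lem:dist btw ncm rungs}}(r,s)$ by \cref{lem:dist btw ncm rungs}, so both have at most $1+(B-1)(f_{\ref{lem:dist btw ncm rungs}}(r,s)-1)$ vertices, and this is what we take as $f_{\ref{lem:bound num ncm rungs cross}}(r,s)$.

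To bound $k$, I would build an auxiliary cycle on which $m_1,\dots,m_k$ lie but are not chords, and force it to be short. Let $m_0$ be the rung of $M$ whose $X$-endpoint is the last one strictly preceding $e_X$ on $X$; such a rung exists because $\sigma_X$ precedes every other vertex of $X$, and the degenerate case $e_X=\sigma_X$ is handled by a simpler version of the argument below. Since $m_0$ and $m_1$ are non-crossing rungs of $M$ with $m_{0,X}<e_X<m_{1,X}$, we have $m_{0,Y}<m_{1,Y}<e_Y$, so $e$ does not cross $m_0$; hence the rungs $m_0$ and $e$ together with the rail arcs $X[m_{0,X},e_X]$ and $Y[m_{0,Y},e_Y]$ form a cycle $C$ of the shape of a clean cycle. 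Its $Y$-arc contains $m_{1,Y},\dots,m_{k,Y}$, so $C$ has at least $k$ vertices; none of $m_1,\dots,m_k$ is a chord of $C$, since each $m_{i,X}>e_X$ lies off $C$; and, crucially, its $X$-arc is short — by the choice of $m_0$, no rung of $M$ has an $X$-endpoint strictly between $m_{0,X}$ and $e_X$, so $X[m_{0,X},e_X]$ lies inside the gap following $m_0$, and \cref{lem:dist btw ncm rungs} bounds it by $f_{\ref{lem:dist btw ncm rungs}}(r,s)$. Every chord of $C$ is a rung with one end on this short arc; since $(L,X,Y)$ is $s$-\ff, each of its at most $f_{\ref{lem:dist btw ncm rungs}}(r,s)$ vertices carries at most $s-2$ rungs, so $C$ has at most $(s-2)f_{\ref{lem:dist btw ncm rungs}}(r,s)$ chords. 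I would then slice $C$ along a suitable subfamily of its chords (together with $m_0$ and $e$) into a bounded number of pieces, arrange that each piece is an induced clean cycle, and use $r$-\cf ness to conclude each piece has order at most $r-1$; summing around $C$ gives $|V(C)|\le g(r,s)$ for an explicit $g$, hence $k\le g(r,s)$. (When $e_X=\sigma_X$ the cycle $C$ has a trivial $X$-arc and all its chords emanate from $\sigma_X$, so cutting at the at most $s-2$ rungs incident to $\sigma_X$ yields induced clean cycles immediately and gives $k\le(s-1)(r-1)$.)

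The step I expect to be the main obstacle is the last one: turning the chorded cycle $C$ into a controlled number of induced clean cycles. The difficulty is that a chord incident to a corner of a slice need not be destroyed by the slicing, so one must track the rungs meeting the cut vertices; handling these requires combining the degree bound from $s$-\ff ness with the girth-type bound from $r$-\cf ness — essentially re-running the argument of \cref{lem:dist btw ncm rungs} on the pieces — rather than a single clean application of either hypothesis. Everything else (the reduction, the choice of $m_0$, and the final assembly via \cref{lem:dist btw ncm rungs}) is routine.
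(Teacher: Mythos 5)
Your overall strategy is viable and, once unwound, lands very close to the paper's own argument: both proofs isolate the short segment of $X$ between $e_X$ and the nearest matching endpoint (bounded by \cref{lem:dist btw ncm rungs}), use $s$-\ff ness to bound the number of rungs leaving that segment by $(s-2)\cdot f_{\ref{lem:dist btw ncm rungs}}(r,s)$, and then use $r$-\cf ness to control the stretches of $Y$ between consecutive such rung-endpoints. Your preliminary reductions are all correct: the case analysis showing $e_X<p_X$ and $q_Y<e_Y$ after normalization, the observation that $e$ then crosses every intermediate matching rung, and the passage from a bound on $k$ to the stated bound via the gap lemma.

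The genuine gap is the step you yourself flag: you never actually bound $|V(C)|$. "Slice $C$ along a suitable subfamily of its chords into induced clean cycles, each of order at most $r-1$" is precisely where the lemma's content lives, and as you note it does not work as stated, because a piece of $C$ cut out by two chords with distinct $X$-endpoints $v_i<v_j$ need not be induced (vertices of $X(v_i,v_j)$ may carry rungs into the piece) and need not even be a clean cycle (the two bounding chords may cross). The paper resolves this with a different decomposition of the $Y$-arc that avoids corner chords entirely: for each vertex $v$ of the short $X$-segment it takes the "shadow" $D_v$, the minimal sub-path of $Y$ spanning the neighbors of $v$, and bounds $|V(D_v)|\le (r-4)(s-3)+(s-2)$ using only single-apex clean cycles (two rungs sharing the endpoint $v$, which never cross and are induced when the two neighbors are consecutive); it then observes that the complement of $\bigcup_v D_v$ in the relevant $Y$-segment has at most $m_1+1$ components, each of order at most $r-4$ by $r$-\cf ness. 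Summing gives $m_1m_2+(m_1+1)(r-4)$ directly for the $Y$-segment containing all the crossed matching endpoints, which also makes your detour through $k$ unnecessary (bounding the $Y$-arc of $C$ already is the bound on $Y[p_Y,q_Y]$, and the $X$-side is symmetric). To complete your write-up you would need to replace the cycle-slicing step with an argument of this shadow-plus-complement form.
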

\begin{proof}
	Let $f_{\ref{lem:bound num ncm rungs cross}}(r,s)=m_1m_2 +(m_1+1)(r-4)-1$ where 
	$m_1=(f_{\ref{lem:dist btw ncm rungs}}(r,s)-1)$, $m_2=(r-4)(s-3)+(s-2)$, and $f_{\ref{lem:dist btw ncm rungs}}(r,s)$ is the number from \cref{lem:dist btw ncm rungs}.
	Let $(p^1,p^2,\dots, p^{\ell})$ be an augmented matching $M$ whose rungs are listed in the order of appearance on the rails.
	Suppose $e$ is a rung such that $e_X$ is on $X[p_X^j,p_X^{j+1})$ and $e_Y$ is on $Y(p_Y^k,p_Y^{k+1}]$ for some $j$ and~$k$. 
	Without loss of generality, we may assume that~$j< k \le \ell-1$.
	Suppose that $e$ crosses two rungs $p^m$ and $p^n$ of~$M$. 
	Since $e$ crosses $p^m$ and $p^n$, it follows that $p^m_Y$ and $p^n_Y$ are on~$Y[p^{j+1}_Y,p^k_Y]$.
	We bound from above the number of vertices on the sub-path of $Y[p^{j+1}_Y,p^k_Y]$, as the argument for the number of vertices on $X$ is similar.
	
	\cref{lem:dist btw ncm rungs} implies the number of vertices on $X[e_X,p^{j+1}_X)$ is at most~$m_1=f_{\ref{lem:dist btw ncm rungs}}(r,s)-1$.
	
	Next, we bound from above the number of vertices on $Y[p^{j+1}_Y,e_Y]$. 
	For each vertex $v\in X[e_x,p^{j+1}_X)$, let $E_v$ be the set of rungs incident with the vertex~$v$.
	Let $D_v$ be the minimal sub-path of $Y$ that contains the endpoints of all the edges of~$E_v$.
	Since $(L,X,Y)$ is $s$-\ff, we have~$|E_v|<s-1$.
	Since $(L,X,Y)$ is $r$-\cf, there are at most $r-4$ internal vertices on $Y$ between every two consecutive rungs of $E_v$.
	This leads to the following inequality~$|V(D_v)|\le(r-4)(|E_v|-1)+|E_v|\le (r-4)(s-3)+(s-2)=m_2.$
	Let $\mathscr{D}= \bigcup_{v\in X[e_X,p^{j+1}_X)} D_v$.
	Since the union is taken over at most $m_1$ elements, the graph $\mathscr{D}$ has at most $m_1m_2$ vertices.  
	
	Let $\mathscr{Y}$ be the graph induced by the vertices of $Y[p^{j+1}_Y,e_Y]-\mathscr{D}$.
	Since there are at most $m_1$ components of $\mathscr{D}$, there are at most $m_1+1$ components of $\mathscr{Y}$.
	Since $(L,X,Y)$ is $r$-\cf, each component of $\mathscr{Y}$ has at most $r-4$ vertices, otherwise $G$ contains a cycle of order at least $r$.
	So, the number of vertices on $Y[p^{j+1}_Y,e_Y]$ is at most~$m_1m_2 +(m_1+1)(r-4)$.
	Every member of $M$ that crosses $e$ must have an endpoint on $Y[p^{j+1}_Y,e_Y)$.
	So the number of vertices on $Y[p^{j+1}_Y,p^k_Y]$ is at most $m_1m_2 +(m_1+1)(r-4)-1=f_{\ref{lem:bound num ncm rungs cross}}(r,s)$, as required.
\end{proof}

Now, we bound from above the number of vertices in each the $X$-span and the $Y$-span of a cross.

\begin{lemma}\label{lem:bounded spans}
	Let $r$ and $s$ be integers exceeding three. 
	There is an integer $f_{\ref{lem:bounded spans}}(r,s)$ such that if messy ladder $(L,X,Y)$ is $r$-\cf\ and $s$-\ff, then the $X$-span and the $Y$-span of every cross is bounded from above by~$f_{\ref{lem:bounded spans}}(r,s)$.
\end{lemma}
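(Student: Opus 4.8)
The plan is to fix an arbitrary cross $(e,f)$ of $(L,X,Y)$ and bound the number of vertices on its $X$-span $X[e_X,f_X]$; the $Y$-span $Y[f_Y,e_Y]$ is then handled by the symmetric argument, since \cref{lem:dist btw ncm rungs} and \cref{lem:bound num ncm rungs cross} both bound distances along $X$ and along $Y$ at once. First I would fix an augmented matching $M$ of $(L,X,Y)$ and list its rungs $p^1,p^2,\dots,p^{\ell}$ in the order in which they appear on $X$. Let $p^j$ be the last of these with $p^j_X\le e_X$ and $p^m$ the first with $p^m_X\ge f_X$; these exist because $\sigma,\tau\in M$, and $j<m$ since $e_X<f_X$. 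Then $X[e_X,f_X]$ is a sub-path of $X[p^j_X,p^m_X]=\bigcup_{i=j}^{m-1}X[p^i_X,p^{i+1}_X]$, a concatenation of $m-j$ sub-paths each having at most $f_{\ref{lem:dist btw ncm rungs}}(r,s)$ vertices by \cref{lem:dist btw ncm rungs}; hence $X[e_X,f_X]$ has at most $(m-j)f_{\ref{lem:dist btw ncm rungs}}(r,s)$ vertices, and it remains only to bound $m-j$. I would take $f_{\ref{lem:bounded spans}}(r,s)=(2f_{\ref{lem:bound num ncm rungs cross}}(r,s)+1)f_{\ref{lem:dist btw ncm rungs}}(r,s)$.

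The heart of the argument is to show $m-j\le 2f_{\ref{lem:bound num ncm rungs cross}}(r,s)+1$. Call a rung $p^i$ with $j<i<m$ \emph{internal}; by the choice of $j$ and $m$, every internal rung satisfies $e_X<p^i_X<f_X$, so in particular $p^i\notin\{\sigma,\tau,e,f\}$. I claim every internal rung is crossed by $e$ or by $f$: if $p^i_Y<e_Y$, then $(e,p^i)$ is a cross (as $e_X<p^i_X$); otherwise $p^i_Y\ge e_Y>f_Y$, so $(p^i,f)$ is a cross (as $p^i_X<f_X$), and the borderline cases $p^i_Y=e_Y$ and $p^i_Y=f_Y$ fall under this dichotomy. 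Moreover, if $e$ crosses an internal rung $p^i$, then, since $e_X<p^i_X$, that crossing is the ordered pair $(e,p^i)$; likewise $f$ crosses an internal rung only as $(p^i,f)$. Consequently the number $m-j-1$ of internal rungs is at most the number of internal rungs crossed by $e$ plus the number crossed by $f$.

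It remains to bound the number of internal rungs crossed by a single rung $g\in\{e,f\}$. All internal rungs crossed by $g$ are members of $M$ whose $X$-endpoint lies on $X(e_X,f_X)$, a sub-path containing neither $\sigma_X$ nor $\tau_X$; since two distinct rungs of an augmented matching can share a vertex only when that vertex is one of $\sigma_X,\sigma_Y,\tau_X,\tau_Y$, these rungs have pairwise distinct $X$-endpoints. If $g$ crosses at most one internal rung we are done; otherwise let $p^a$ and $p^b$ be the internal rungs crossed by $g$ whose $X$-endpoints are, respectively, smallest and largest. Every internal rung crossed by $g$ has its $X$-endpoint on $X[p^a_X,p^b_X]$, and by \cref{lem:bound num ncm rungs cross} applied to $g$, $p^a$, and $p^b$, this sub-path has at most $f_{\ref{lem:bound num ncm rungs cross}}(r,s)$ vertices, so $g$ crosses at most $f_{\ref{lem:bound num ncm rungs cross}}(r,s)$ internal rungs. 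Combining with the previous paragraph yields $m-j-1\le 2f_{\ref{lem:bound num ncm rungs cross}}(r,s)$, and hence the required bound. The step I expect to demand the most care is the dichotomy showing that every internal rung is crossed by $e$ or $f$ in all boundary cases, together with the passage from ``few vertices on a sub-path of $X$'' to ``few augmented-matching rungs with an $X$-endpoint there'', which rests essentially on the non-adjacency property of augmented matchings.
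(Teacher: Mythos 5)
Your argument is correct and follows essentially the same route as the paper: both proofs anchor the cross $(e,f)$ in an augmented matching and combine \cref{lem:dist btw ncm rungs} with \cref{lem:bound num ncm rungs cross} to bound how far the span can reach. The only difference is packaging — you count the matching rungs lying inside the span (via the dichotomy that each internal rung is crossed by $e$ or by $f$) and multiply by the maximum gap length, whereas the paper directly covers the span by a union of bounded sub-paths around $e$ and $f$ — which yields a different but equally valid explicit constant.
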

\begin{proof}
	Let $f_{\ref{lem:bounded spans}}(r,s)=2(f_{\ref{lem:bound num ncm rungs cross}}(r,s)+2f_{\ref{lem:dist btw ncm rungs}}(r,s)-2)$ where $f_{\ref{lem:dist btw ncm rungs}}(r,s)$ and $f_{\ref{lem:bound num ncm rungs cross}}(r,s)$ are the numbers from \cref{lem:dist btw ncm rungs,lem:bound num ncm rungs cross}, respectively.
	Let $(p^1,p^2,\dots, p^m)$ be an augmented matching $M$ whose rungs are listed in the order in which they appear on the rails.
	Note that $\sigma= p^1$ and $\tau=p^m$.
	For the cross $(e,f)$ suppose that $e_X\in ~X[p^j_X,p^{j+1}_X)$ and suppose that $f_X \in X(e_X,p^{\ell}_X]$ where $\ell$ is minimal subject to $f_X\le p^{\ell}_X$.
	Suppose that $e_Y\in Y(\sigma_Y,p^k_Y]$ where $k$ is minimal subject to $e_Y\le p_Y^k$ and suppose also that $f_Y\in Y[p^i_Y,e_Y)$ where~ $i$ is maximal subject to~$p^i_Y\le f_Y$.
	
	We will bound the number of vertices in the $Y$-span; the argument for the $X$-span is very similar.
	
	We will define $Y_1$ depending on the relation of $e_Y$ to $p^j_Y$.  
	If $p^j_Y\le e_Y$, then let $Y_1=Y[p^j_Y,p^{j+1}_Y]\cup Y(p^{j+1}_Y,p^{k-1}_Y)\cup Y[p^{k-1}_Y,e_Y]$. 
	By \cref{lem:bound num ncm rungs cross}, the sub-path $Y[p^{j+1}_Y,p^{k-1}_Y]$ has at most $f_{\ref{lem:bound num ncm rungs cross}}(r,s)$ vertices.
	So, the sub-path $Y(p^{j+1}_Y,p^{k-1}_Y)$ has at most $f_{\ref{lem:bound num ncm rungs cross}}(r,s)-2$ vertices.
	By \cref{lem:dist btw ncm rungs}, the number of vertices on $Y[p^j_Y,p^{j+1}_Y]$ is at most~$f_{\ref{lem:dist btw ncm rungs}}(r,s)$.
	Similarly, the number of vertices on $Y[p^{k-1}_Y,e_Y]$ is at most~$f_{\ref{lem:dist btw ncm rungs}}(r,s)$.
	So the number of vertices on $Y_1$ is at most~$2f_{\ref{lem:dist btw ncm rungs}}(r,s)+f_{\ref{lem:bound num ncm rungs cross}}(r,s)-2$. 
	If $e_Y<p^j_Y$, then let $Y_1=Y[e_Y,p^k_Y]\cup Y(p^k_Y,p^j_Y)\cup Y[p^j_Y,p^{j+1}_Y]$.
	By \cref{lem:bound num ncm rungs cross}, the sub-path $Y[p^k_Y,p^j_Y]$ has at most $f_{\ref{lem:bound num ncm rungs cross}}(r,s)$ vertices.
	So, the sub-path $Y(p^k_Y,p^j_Y)$ has at most $f_{\ref{lem:bound num ncm rungs cross}}(r,s)-2$ vertices.
	By \cref{lem:dist btw ncm rungs}, the number of vertices on $Y[p^j_Y,p^{j+1}_Y]$ is at most~$f_{\ref{lem:dist btw ncm rungs}}(r,s)$.
	Similarly, the number of vertices on $Y[p^{k-1}_Y,e_Y]$ is at most~$f_{\ref{lem:dist btw ncm rungs}}(r,s)$.
	So the number of vertices on $Y_1$ is at most~$2f_{\ref{lem:dist btw ncm rungs}}(r,s)+f_{\ref{lem:bound num ncm rungs cross}}(r,s)-2$. 
	
	Similarly, we will define $Y_2$ depending on the relation of $f_Y$ to $p^{\ell-1}_Y$. 
	If $p^{\ell-1}_Y\le f_Y$, then let $Y_2 =Y[p^{\ell-1}_Y,p^{\ell}_Y]\cup Y(p^{\ell}_Y,p^i_Y)\cup Y[p^i_Y,f_Y]$.  The argument for this case is analogous to the argument for $p^j_Y\le e_Y$.  
	If $f_Y< p^{\ell-1}_Y$, then let $Y_2= Y[f_Y,p^{i+1}_Y]\cup Y(p^{i+1}_Y, p^{\ell-1}_Y)\cup Y[p^{\ell-1}_Y,p^{\ell}_Y]$.
	Likewise, this case follows the argument when $e_Y<p^j_Y$; and thus, the number of vertices on $Y_2$ is at most~$2f_{\ref{lem:dist btw ncm rungs}}(r,s)+f_{\ref{lem:bound num ncm rungs cross}}(r,s)-2$.
	
	Combining the bounds on the number of vertices on $Y_1$ and $Y_2$, we get that $Y_1\cup Y_2$ has at most $2(f_{\ref{lem:bound num ncm rungs cross}}(r,s)+2f_{\ref{lem:dist btw ncm rungs}}(r,s)-2)$ vertices.  
	Since $e$ and $f$ cross, $Y[f_Y,e_Y]$ is a sub-path of $Y_1\cup Y_2$.
	Therefore, the number of vertices in the $Y$-span of a cross is at most $f_{\ref{lem:bounded spans}}(r,s)$, as required.
\end{proof}
Define a \emph{sub-ladder} $(L',X',Y')$ of $(L,X,Y)$ to be a messy ladder such that $X'$ and $Y'$ are rooted sub-paths of $X$ and $Y$, respectively, and $L'$ is the subgraph of $L$ induced by the vertices on~$X'\cup Y'$.
A \emph{cross-free ladder} is a messy ladder such that no pair of its rungs cross. 
A cross-free ladder is obviously a clean ladder. 
A messy ladder is \emph{$q$-cross-crowded} if it does not conduce a cross-free sub-ladder of order $q$ or more.
Note that a $q$-\cc~messy ladder is also $q$-\cf\ and~$q$-\ff.

In the previous lemmas, we considered messy ladders that were $r$-\cf\ and $s$-\ff\ for some integers $r$ and ~$s$.  
In the following lemma, we need a stronger assumption, namely that the messy ladder is $q$-\cc~for some integer $q$.
Since a cross-free ladder is a clean ladder, we restrict the order of the largest cross-free sub-ladder and show that a large $q$-\cc\ messy ladder
has a long maximal sequence of \indfull crosses.

\begin{lemma}\label{lem:manycrossesorlargegap}
	Let $q$ be an integer exceeding three and let $w$ be a positive integer.  
	There is an integer $f_{\ref{lem:manycrossesorlargegap}}(q,w)$ such that if a $q$-\cc~messy ladder has order at least $f_{\ref{lem:manycrossesorlargegap}}(q,w)$, then the length of every maximal sequence of \indfull crosses is at least~$w$. 
\end{lemma}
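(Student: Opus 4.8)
The plan is to show the contrapositive in the following sense: if some maximal sequence $\mathscr{X}$ of pairwise independent full crosses has length at most $w-1$, then the order of the messy ladder is bounded by a function $f_{\ref{lem:manycrossesorlargegap}}(q,w)$. Since the ladder is $q$-\cc, it is in particular $q$-\cf\ and $q$-\ff, so \cref{lem:bounded spans} applies with $r=s=q$: every cross has $X$-span and $Y$-span of order at most $b:=f_{\ref{lem:bounded spans}}(q,q)$. The idea is that a maximal sequence of independent full crosses ``covers'' all the crossing behavior of the ladder, so the parts of the rails not involved in these crosses, together with the pieces between consecutive crosses, are all essentially cross-free, hence bounded in size by the $q$-\cc\ hypothesis.

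First I would use \cref{l:max-cross-order} and the order relation defined right after it to write $\mathscr{X}=(\X_1,\dots,\X_z)$ with $z\le w-1$ in increasing order, where $\X_i=(e^i,f^i)$. By \cref{l:max-cross-order}, the $X$-spans $X[e^i_X,f^i_X]$ occur in order along $X$ and the $Y$-spans $Y[f^i_Y,e^i_Y]$ occur in order along $Y$ (possibly overlapping at endpoints). I would then decompose $X$ into the at most $z$ spans $X[e^i_X,f^i_X]$ together with the at most $z+1$ ``gap'' sub-paths of $X$ lying strictly between consecutive spans (and before the first / after the last), and similarly for $Y$. Each span has at most $b$ vertices by \cref{lem:bounded spans}, contributing at most $zb\le (w-1)b$ vertices on each rail.

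The key step is to bound each gap sub-path. I claim that the sub-ladder induced by a gap on $X$ together with the corresponding gap on $Y$ — more precisely, the sub-ladder $(L',X',Y')$ where $X'$ and $Y'$ are two matched gap sub-paths lying between $\X_i$ and $\X_{i+1}$ — is cross-free. Indeed, if it contained a crossing pair $(g,h)$ of rungs, that pair would extend to a \emph{full} cross $(g',h')$ of $(L,X,Y)$ whose $X$- and $Y$-spans lie within these gaps, hence are edge-disjoint from all spans of $\mathscr{X}$; one then checks, using the ordering, that $(g',h')$ is independent from every $\X_j$ (its $X$-span sits strictly between those of $\X_i$ and $\X_{i+1}$, and likewise on $Y$), so $\mathscr{X}\cup\{(g',h')\}$ would be a longer sequence of pairwise independent full crosses, contradicting maximality of $\mathscr{X}$. (The boundary cases, where a rung of the gap ladder has an endpoint on a span of some $\X_j$, need a short separate argument: such a rung cannot cross another gap rung without again producing a full cross independent of the $\X_j$'s.) Since $(L,X,Y)$ is $q$-\cc, every cross-free sub-ladder has order at most $q-1$, so each of the at most $2(w-1)+2$ gap ladders has order at most $q-1$. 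Summing, the total order is at most $2(w-1)b + (2w)(q-1)$, and I would set $f_{\ref{lem:manycrossesorlargegap}}(q,w)$ to be one more than this bound; then order $\ge f_{\ref{lem:manycrossesorlargegap}}(q,w)$ forces $z\ge w$.

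The main obstacle is the boundary bookkeeping in the claim above: correctly defining what the ``matched gap'' on the two rails is (the gaps on $X$ and $Y$ need not have the same indices, since the $X$-spans and $Y$-spans of $\mathscr{X}$ interleave differently), and verifying in every case that an extra crossing pair inside a gap region really does yield a full cross that is independent of all members of $\mathscr{X}$ — in particular handling rungs that straddle the endpoint of a span of some $\X_j$, and rungs incident to $\sigma_X,\sigma_Y,\tau_X,\tau_Y$. Once that combinatorial claim is pinned down, the counting is routine.
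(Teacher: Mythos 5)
Your overall strategy is the same as the paper's: prove the contrapositive, split each rail into the $X$-spans (resp.\ $Y$-spans) of the crosses in $\mathscr{X}$, bound those via \cref{lem:bounded spans}, and then bound the gaps between consecutive spans using the $q$-\cc\ hypothesis. The difference is in how the gaps are bounded, and that is where your argument has a genuine gap. You assert that the ``matched gap'' region between $\X_i$ and $\X_{i+1}$ induces a cross-free sub-ladder and hence has order at most $q-1$. But a sub-ladder is by definition a messy ladder, so it must have edges between the initial vertices and between the terminal vertices of its two sub-rails; the open segments $X(f^i_X,e^{i+1}_X)$ and $Y(e^i_Y,f^{i+1}_Y)$ generally have no such bounding rungs, so $q$-cross-crowdedness says nothing directly about the order of the graph they induce. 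More substantively, fullness of $\X_i$ and $\X_{i+1}$ only forces a rung with one end in $X(f^i_X,e^{i+1}_X)$ to have its other end in $Y(f^i_Y,e^{i+1}_Y)$, which properly contains your $Y$-gap: such a rung may land in the \emph{interior of the $Y$-span} of $\X_i$ or of $\X_{i+1}$. These rungs are invisible to your gap sub-ladder, and the vertices of the $X$-gap they attach to, together with the rung-free stretches at either end of the gap, are simply not counted by your argument. This is exactly the material the paper handles with $q$-\ff\ (at most $q-2$ rungs per vertex of an adjacent $Y$-span interior, which has at most $f_{\ref{lem:bounded spans}}(q,q)-2$ vertices) and $q$-\cf\ (at most $q-4$ rail vertices between consecutive rungs); the resulting per-gap bound is on the order of $q^2\cdot f_{\ref{lem:bounded spans}}(q,q)$, not $q-1$, so your final count $2(w-1)b+2w(q-1)$ is not justified.

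A second, related problem is the maximality step you flag as ``the main obstacle'': it is not merely bookkeeping. A crossing pair $(g,h)$ inside a gap does extend to a full cross $(g'',h'')$, but fullness of $\X_i$ only prevents $g''_X$ from reaching back to $e^i_X$ or earlier; it does not prevent $g''_X$ from lying in the interior of the $X$-span of $\X_i$ (and symmetrically on $Y$). In that case $(g'',h'')$ is full but \emph{not} independent of $\X_i$, so appending it to $\mathscr{X}$ does not contradict maximality, and no immediate contradiction is available. The paper sidesteps this by restricting the cross-free--sub-ladder argument to the set of rungs joining the two open gaps (where it bounds only the \emph{number of rungs}, not the number of vertices) and by controlling everything else with fan-freeness and cycle-freeness. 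To repair your proof you would need to do essentially the same: bound the rungs landing in adjacent span interiors by fan-freeness, bound inter-rung stretches by cycle-freeness, and accept a per-gap bound polynomial in $q$ and $f_{\ref{lem:bounded spans}}(q,q)$ rather than linear in $q$.
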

\begin{proof}
	Let $q$ and $w$ be integers such that $q \ge 4$ and $w \ge 1$,
	and let
	\[f_{\ref{lem:manycrossesorlargegap}}(q,w)=4(f_{\ref{lem:bounded spans}}(q,q)+1)(q^2+q)+2(w-1)f_{\ref{lem:bounded spans}}(q,q)+2(2f_{\ref{lem:bounded spans}}(q,q)+1)(q^2+q)(w-2)\] where $f_{\ref{lem:bounded spans}}(q,q)$ is the number from \cref{lem:bounded spans}.
	We prove that $f_{\ref{lem:manycrossesorlargegap}}(q,w)$ satisfies the conclusion.
	Suppose that $(L,X,Y)$ is a $q$-cross-crowded messy ladder that has a maximal sequence~$\mathscr{X}$ of pairwise independent crosses that has $z$ elements, where $0 \le z \le w-1$.
	We will show that the number of vertices of $L$ is less than $f_{\ref{lem:manycrossesorlargegap}}(q,w)$, thereby proving the lemma.
	
	If $z = 0$, then $(L,X,Y)$ is cross-free and so
	$|V(L)| < q \le f_{\ref{lem:manycrossesorlargegap}}(q,w)$,
	and the conclusion follows.
	So, for the remainder of the proof, we may assume that $z \ge 1$.
	Also, by symmetry, we may assume that $|V(X)| \ge |V(Y)|$, and concentrate on finding an upper bound only for $|V(X)|$.
	
	Let $\mathscr{X} = (\mathcal{X}_1, \mathcal{X}_2, \ldots, \mathcal{X}_z)$, and, for each $i$ in $\{1,2,\dots,z\}$, let $S_i$ be the $X$-span of $\mathcal{X}_i$.
	By \cref{lem:bounded spans}, the number of vertices of $S_i$ is at most	$f_{\ref{lem:bounded spans}}(q,q)$, and so the union $S$ of all $X$-spans of the crosses	in $\mathscr{X}$ has order at most $z f_{\ref{lem:bounded spans}}(q,q)$.
	
	Now, let $T = X - \bigcup\limits_{i=1}^z S_i$, and let $\mathcal{X}_i=(e^i,f^i)$ for each $i$ in $\{1,2,\dots,z\}$.
	Every connected component of $T$ is of one of the following forms:
	$X[\sigma_X,e^1_X)$, $X(e^z_X,\tau_X]$, and $X(f_X^i,e_X^{i+1})$ for some $i$ in $\{1, 2, \ldots,z-1\}$ in the case $z\ge 2$.
	Before finding upper bounds on the orders of such segments of $X$, we will present an upper bound
	on the number of rungs incident with vertices of those segments.
	
	First, we will bound the number of rungs incident with vertices of $X[\sigma_X,e_X^1)$. Note that the argument for the number of rungs incident with vertices on $X(f_X^z,\tau_X]$ is similar.
	Since each cross in $\mathscr{X}$ is full, each rung incident with a vertex in $X[\sigma_X,e_X^1)$ has the other endpoint on $Y[\sigma_Y,e_Y^1)$.
	Since $(L,X,Y)$ has no cross-free sub-ladder of order $q$, there are at most $q-1$ rungs with one endpoint on $X[\sigma_X,e_X^1)$ and other endpoint on $Y[\sigma_Y,f_Y^1]$.
	Since $(L,X,Y)$ is $q$-\ff, each vertex on $Y(f_Y^1,e_Y^1)$ is incident with at most $q-2$ rungs that have other endpoint on $X[\sigma_X,e_X^1)$.
	So there are at most $(q-2)(f_{\ref{lem:bounded spans}}(q,q)-2)$ rungs with one endpoint on $X[\sigma_X,e_X^1)$ and a distinct endpoint on $Y(f_Y^1,e_Y^1)$.
	Thus, there are at most $(q-2)(f_{\ref{lem:bounded spans}}(q,q)-2)+q-1$ rungs incident with a vertices of $X[\sigma_X,e_X^1)$.  
	Similarly, there are at most $(q-2)(f_{\ref{lem:bounded spans}}(q,q)-2)+q-1$ rungs incident with a vertices of $X(f_X^z,\tau_X]$.  
	
	Next, we assume that $z\ge 2$ and we bound from above the number of rungs incident with an arbitrary segment on $X$ between consecutive crosses of $\mathscr{X}$.
	Since each cross in $\mathscr{X}$ is full, each rung incident with a vertex in $X(f_X^i,e_X^{i+1})$ for some $i$ in $\{1,2\dots, z-1\}$ have the other endpoint on $Y(f_Y^i,e_Y^{i+1})$.
	Since $(L,X,Y)$ has no cross-free sub-ladder of order $q$, there are at most $q-1$ rungs with one endpoint on $X(f_X^i,e_X^{i+1})$ and other endpoint on $Y(e_Y^i,f_Y^{i+1})$.
	Since $(L,X,Y)$ is $q$-\ff, each vertex on one of $Y(f_Y^i,e_Y^i)$ and $Y(f_Y^{i+1},e_Y^{i+1})$ for some $i$ in $\{1,2,\dots, z-1\}$ is incident with at most $q-2$ rungs that have a distinct endpoint on $X(f_X^i,e_X^{i+1})$.
	So there are at most $2(q-2)(f_{\ref{lem:bounded spans}}(q,q)-2)$ rungs with one endpoint on $X(f_X^i,e_X^{i+1})$ and other endpoint on either $Y(f_Y^i,e_Y^i]$ or $Y[f_Y^{i+1},e_Y^{i+1})$.
	Thus, there are at most $2(q-2)(f_{\ref{lem:bounded spans}}(q,q)-2)+q-1$ rungs incident with a vertices of $X(f_X^i,e_X^{i+1})$.
	
	Now, we will bound from above the number of vertices on $X$.
	Since $(L,X,Y)$ is $q$-\cf, there are at most $q-4$ vertices between two rungs that are consecutive on $X$. 
	The bounds obtained above are cumbersome, and since this paper proves an existence result, we relax the number to obtain $f_{\ref{lem:manycrossesorlargegap}}(q,w)$. 
	The number of vertices on each of $X[\sigma_X,e_X^1)$ and $X(f_X^z,\tau_X]$ is at most \[(q-2)(f_{\ref{lem:bounded spans}}(q,q)-2)+q-1 + ((q-2)(f_{\ref{lem:bounded spans}}(q,q)-2)+q)(q-4)<(f_{\ref{lem:bounded spans}}(q,q)+1)(q^2+q).\]
	Similarly, the number of vertices on each $X(f_X^i,e_X^{i+1})$ is at most 
	\[(2(q-2)(f_{\ref{lem:bounded spans}}(q,q)-2)+q)(q-4)+2(q-2)(f_{\ref{lem:bounded spans}}(q,q)-2)+q-1<(2f_{\ref{lem:bounded spans}}(q,q)+1)(q^2+q).\]
	
	Since $z\le w-1$, the number of vertices on $X$ is at most, $g(q)=2((q-2)(f_{\ref{lem:bounded spans}}(q,q)-1)+q-1 + ((q-2)(f_{\ref{lem:bounded spans}}(q,q)-1)+q)(q-4))+(w-1)f_{\ref{lem:bounded spans}}(q,q) + (w-2)((2(q-2)(f_{\ref{lem:bounded spans}}(q,q)-1)+q)(q-4)+2(q-2)(f_{\ref{lem:bounded spans}}(q,q)-1)+q-1)$.
	Thus, the number of of vertices on $(L,X,Y)$ is at most $2g(q)<f_{\ref{lem:manycrossesorlargegap}}(q,w)$, a contradiction.
	Hence, every maximal sequence of pairwise independent full crosses has length at least $w$.	
\end{proof}	

The following lemma combines the previous lemmas in this section to complete the proof that a clean ladder of desired order is a sub-ladder of every messy ladder that is large enough.

\begin{lemma}\label{lem:messyfinite}
	Let $t$ be an integer exceeding two. 
	There is an integer $f_{\ref{lem:messyfinite}}(t)$ such that if a messy ladder with a maximal sequence of \indfull crosses $\mathscr{X}$ has order at least $f_{\ref{lem:messyfinite}}(t)$, then resolving the crosses of $\mathscr{X}$ results in a clean ladder of order at least~$t$.
\end{lemma}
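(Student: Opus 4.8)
The plan is to fix the messy ladder $(L,X,Y)$ and the prescribed maximal sequence $\mathscr{X}=(\mathcal{X}_1,\dots,\mathcal{X}_z)$ of pairwise independent full crosses, write $\mathcal{X}_i=(e^i,f^i)$, put $q=\max\{t,4\}$, and set $f_{\ref{lem:messyfinite}}(t)=f_{\ref{lem:manycrossesorlargegap}}(q,t)$. By \cref{l:reroute-rail-induced}, resolving the crosses of $\mathscr{X}$ produces a clean ladder $(H,U,W)$, so the whole task is to bound its order from below by $t$ (or, in the easy case below, to exhibit a clean ladder of order at least $t$). I would split according to whether $(L,X,Y)$ is $q$-cross-crowded.

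If $(L,X,Y)$ is not $q$-cross-crowded, then by definition it conduces a cross-free sub-ladder of order at least $q\ge t$; since a cross-free ladder is a clean ladder, a clean ladder of order at least $t$ is already exhibited and this case is finished. (Here one only has to unwind the definitions; the point needing care is simply that in this case the clean ladder produced is this separate cross-free sub-ladder rather than $(H,U,W)$ itself.)

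In the remaining case $(L,X,Y)$ is $q$-cross-crowded and, by hypothesis, has order at least $f_{\ref{lem:manycrossesorlargegap}}(q,t)$. Applying \cref{lem:manycrossesorlargegap} with $w=t$ (legitimate since $q\ge 4$ and $t\ge 1$), every maximal sequence of pairwise independent full crosses — in particular $\mathscr{X}$ — has length $z\ge t$. To convert this into a lower bound on $|V(H)|$, recall that resolving a cross deletes only the vertices strictly inside its $X$- and $Y$-spans, so by \cref{vertsofcl} the vertices $e^i_X,e^i_Y,f^i_X,f^i_Y$ for $1\le i\le z$ all survive into $V(H)$. Since the crosses of $\mathscr{X}$ are pairwise independent their $X$-spans are edge-disjoint, and by \cref{l:max-cross-order} they are ordered along $X$, so
\[ e^1_X<f^1_X\le e^2_X<f^2_X\le\cdots\le e^z_X<f^z_X ; \]
hence $e^1_X,f^1_X,f^2_X,\dots,f^z_X$ are $z+1$ distinct vertices of $X$, and symmetrically $Y$ carries $z+1$ distinct such vertices. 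As the rails are disjoint, $|V(H)|\ge 2z+2\ge 2t+2>t$, which completes this case and, with the choice of $f_{\ref{lem:messyfinite}}(t)$ above, the lemma.

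The step I expect to require the most attention is the interface between the two cases — making sure that ``conduces a clean ladder of order at least $t$'' holds uniformly, given that the clean ladder is produced from $\mathscr{X}$ in the cross-crowded case but is an unrelated cross-free sub-ladder in the other case. A variant of the cross-crowded case that cites \cref{lem:bounded spans} rather than \cref{lem:manycrossesorlargegap} also works: each span then has at most $f_{\ref{lem:bounded spans}}(q,q)$ vertices, so $|V(H)|\ge |V(L)|-2z\,f_{\ref{lem:bounded spans}}(q,q)$, and combining this with $|V(H)|\ge 2z+2$ gives $|V(H)|\ge |V(L)|/(1+f_{\ref{lem:bounded spans}}(q,q))$, which forces $|V(H)|\ge t$ once $f_{\ref{lem:messyfinite}}(t)\ge t\bigl(1+f_{\ref{lem:bounded spans}}(q,q)\bigr)$, at the cost of a weaker bound.
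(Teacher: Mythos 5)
Your proposal is correct and follows essentially the same route as the paper: the same case split on whether the ladder is cross-crowded (with the cross-free sub-ladder serving directly as the clean ladder in the easy case, a slight mismatch with the literal statement that the paper's own proof also glosses over), the same appeal to \cref{lem:manycrossesorlargegap} to get many crosses in $\mathscr{X}$, and the same count via \cref{vertsofcl} that the surviving cross endpoints give at least two new vertices per cross. The only differences are cosmetic: you take $w=t$ where the paper takes $w=t/2-1$, and you make the ordering from \cref{l:max-cross-order} explicit rather than saying ``each subsequent cross contributes at least two new vertices.''
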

\begin{proof}  
	Without loss of generality, we may assume that $t$ is even.
	We prove that $f_{\ref{lem:messyfinite}}(t)=f_{\ref{lem:manycrossesorlargegap}}(t,w)$, where $f_{\ref{lem:manycrossesorlargegap}}(t,w)$ is the number from \cref{lem:manycrossesorlargegap} and $w= \frac{t}{2}-1$, satisfies the conclusion.
	Suppose $(L,X,Y)$ is a messy ladder of order at least~$f_{\ref{lem:messyfinite}}(t)$ and $\mathscr{X}=(\X_1,\X_2,~\dots,~\X_z)$.
	
	If $(L,X,Y)$ conduces a cross-free sub-ladder of order at least $t$, then the conclusion holds.
	Therefore, we may assume that $(L,X,Y)$ is $t$-cross-crowded.
	
	By \cref{lem:manycrossesorlargegap}, it follows that $z\ge w$. 
	
	Let $(H,U,W)$ be the ladder obtained by resolving the crosses of $(L,X,Y)$.  
	By \cref{l:reroute-rail-induced}, $(H,U,W)$ is a clean ladder.
	It remains to show that $(H,U,W)$ has order at least~$t$.  
	By Remark~\ref{vertsofcl}, each full cross from the sequence has four vertices in $(H,U,W)$.  
	The cross $\X_1$ contributes four vertices to $(H,U,W)$ and each of the subsequent crosses contributes at least two new vertices to $(H,U,W)$.
	Thus $(H,U,W)$ has at least $4+2(w-1)=t$ vertices, as required.
\end{proof}

The following lemma combines the results from \cref{lem:longp,lem:messyfinite}.
We will use this lemma in the proof of the main result of the paper, \cref{thm:finite}.
\begin{lemma}\label{lem:messy-to-clean}
	Let $t$ be an integer exceeding three.  
	There is an integer $f_{\ref{lem:messy-to-clean}}(t)$ such that if a messy ladder $(L,X,Y)$ has a path of order at least $f_{\ref{lem:messy-to-clean}}(t)$, then $(L,X,Y)$ conduces one of the following: $K_t$, $K_{2,t}$, $K_{2,t}^+$, and a clean ladder of order at least~$t$.  	
\end{lemma}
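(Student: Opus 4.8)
The plan is to obtain \cref{lem:messy-to-clean} by gluing \cref{lem:longp} to \cref{lem:messyfinite}, exploiting the fact that a messy ladder is itself a $2$-connected graph. I would set
\[
	f_{\ref{lem:messy-to-clean}}(t)=f_{\ref{lem:longp}}\bigl(t,\,f_{\ref{lem:messyfinite}}(t)\bigr),
\]
and first dispatch the bookkeeping: since $t>3$ we have $t>2$, and $f_{\ref{lem:messyfinite}}(t)\ge t>2$ because a clean ladder of order at least $t$ cannot be produced from a messy ladder of smaller order. Hence \cref{lem:longp} is applicable with $p=t$ and $q=f_{\ref{lem:messyfinite}}(t)$, and \cref{lem:messyfinite} is applicable with the given $t$.

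The first substantive step is the elementary observation that every messy ladder $(L,X,Y)$ is $2$-connected. Because at most one of the two rails is trivial, $L$ has at least three vertices, and the rails $X$ and $Y$ together with the edges $\sigma$ and $\tau$ form a cycle through every vertex of $L$; thus $L$ is Hamiltonian and hence $2$-connected. So, given a messy ladder $(L,X,Y)$ with a path of order at least $f_{\ref{lem:messy-to-clean}}(t)$, I would invoke \cref{lem:longp} to conclude that $L$ conduces one of $K_t$, $K_{2,t}$, $K_{2,t}^+$, and a messy ladder of order at least $f_{\ref{lem:messyfinite}}(t)$. In the first three cases the lemma is proved outright, so the remaining case is when $L$ conduces a messy ladder $(L',X',Y')$ of order at least $f_{\ref{lem:messyfinite}}(t)$.

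In that case I would fix any maximal sequence $\mathscr{X}$ of \indfull crosses of $(L',X',Y')$ — such a sequence exists since the empty sequence extends greedily to a maximal one — and apply \cref{lem:messyfinite} to get that resolving the crosses of $\mathscr{X}$ produces a clean ladder $(H,U,W)$ of order at least $t$. The last point is to check that $(H,U,W)$ is an \emph{induced} subgraph of $L$: resolving a cross $(e^i,f^i)$ only deletes the span interiors $X(e_X^i,f_X^i)$ and $Y(f_Y^i,e_Y^i)$ and re-designates the two crossing rungs as rail edges, adding no edges, so iterating, $H$ is induced in $L'$, which is in turn induced in $L$. Therefore $(L,X,Y)$ conduces the clean ladder $(H,U,W)$ of order at least $t$, as desired.

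I do not expect a genuine obstacle here: this is a packaging lemma, and the only thing that needs care is the ``induced'' verification in the last step, namely confirming that the cross-resolution process of this section outputs an induced subgraph and not merely a subgraph. That follows directly from the definition of resolving a cross recalled earlier in this section, since that operation never introduces a new edge and only ever deletes vertices.
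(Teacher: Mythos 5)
Your proof is correct and follows essentially the same route as the paper's: both set $f_{\ref{lem:messy-to-clean}}(t)=f_{\ref{lem:longp}}\bigl(t,f_{\ref{lem:messyfinite}}(t)\bigr)$ and simply chain \cref{lem:longp} with \cref{lem:messyfinite}. The only difference is that you explicitly verify details the paper leaves implicit, namely that a messy ladder is $2$-connected (so \cref{lem:longp} applies) and that the resolved clean ladder is an induced subgraph.
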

\begin{proof}
	Let $f_{\ref{lem:messy-to-clean}}(t)=f_{\ref{lem:longp}}(t,q)$ where ~$q=f_{\ref{lem:messyfinite}}(t)$. 
	Since $G$ has a path of order at least~$f_{\ref{lem:longp}}(t,q)$, \cref{lem:longp} asserts that $G$ conduces one of the following: $K_t$, $K_{2,t}$, $K_{2,t}^+$, and a messy ladder of order at least~$q$. 		
	If $G$ conduces a messy ladder of order at least $q$, then \cref{lem:messyfinite} asserts that $G$ conduces a clean ladder of order at least~$t$, as required.  
\end{proof}

\section{Proving Main \cref{thm:finite}}\label{Proving Main Theorem}
The main theorem is a straightforward consequence of \cref{lem:shortp,lem:messy-to-clean}.
Specifically, for some integer $r$ exceeding two, if a 2-connected graph $G$ has a sufficiently long path, then it conduces one of the following: $K_r$, $K_{2,r}$, $K_{2,r}^+$, and a clean ladder of order at least $r$; and if $G$ fails to have a sufficiently long path, but is large enough, then $G$ conduces a member of one of the families $\mathscr{K}_{2,r}^+$ and ~$\mathscr{K}_{2,r}$.

\begin{namedthm*}{\cref{thm:finite}}
	Let $r$ be an integer exceeding two. 
	There is an integer $f_{\ref{thm:finite}}(r)$ such that every $2$-connected graph of order at least $f_{\ref{thm:finite}}(r)$ conduces one of the following: $K_r$, a clean ladder of order at least $r$, a member of $\mathscr{K}_{2,r}$, and a member of $\mathscr{K}_{2,r}^+$.
\end{namedthm*}
\begin{proof}
	Let $f_{\ref{thm:finite}}(r)=f_{\ref{lem:shortp}}(q,r)$ where ~$q=f_{\ref{lem:messy-to-clean}}(r)$. 
	Since $G$ has at least $f_{\ref{lem:shortp}}(q,r)$ vertices, \cref{lem:shortp} asserts that $G$ has a path of order $q$ or conduces a member of one of the families  $\mathscr{K}_{2,r}$ and $\mathscr{K}_{2,r}^{+}$.  
	If $G$ conduces a path of order $q$, then, by \cref{lem:messy-to-clean}, the graph $G$ conduces one of the following: $K_r$, $K_{2,r}$, $K_{2,r}^+$, and a clean ladder of order at least~$r$.
	This completes the proof.
\end{proof}

\bibliographystyle{plain}
\bibliography{bibliography}{}
\vspace{-5mm}

\end{document}